\documentclass[a4paper,11pt]{article} 
\usepackage[top=2.5cm, bottom=2.5cm, left=2.5cm, right=2.5cm]{geometry}
\usepackage{t1enc} 
\usepackage[utf8]{inputenc}
\usepackage{amssymb,amsmath,amsthm} 
\usepackage{graphicx,hyperref}
\usepackage{pgf,tikz}
\usepackage{hyperref}
\usetikzlibrary{shapes,backgrounds,arrows}
\usepackage{mathrsfs}
\usetikzlibrary{arrows}
\newtheorem{theorem}{Theorem}[section]
\newtheorem{proposition}[theorem]{Proposition}
\newtheorem{corollary}[theorem]{Corollary}
\newtheorem{lemma}[theorem]{Lemma}
\newtheorem{definition}[theorem]{Definition}
\newtheorem{remark}[theorem]{Remark}

\title{On edge-girth-regular graphs: lower bounds and new families}
\author{István Porupsánszki \footnote{This research was supported in part  by the Hungarian National Research, Development and Innovation Office  OTKA grant no. SNN 132625}\footnote{Department of Geometry and MTA-ELTE Geometric and Algebraic Combinatorics Research Group, Eötvös
Loránd University, 1117 Budapest, Pázmány s. 1/c, Hungary}}
\date{}

\begin{document}

\maketitle
\section*{Abstract}
An edge-girth-regular graph $egr(n,k,g,\lambda)$ is a $k-$regular graph of order $n$, girth $g$ and with the property that each of its edges is contained in exactly $\lambda$ distinct $g-$cycles. We present new families of edge-girth regular graphs arising from generalized quadrangles and pencils of elliptic quadrics.\\
An $egr(n, k, g, \lambda)$ is called extremal for the triple $(k, g, \lambda)$ if $n$ is the smallest order of any
$egr(n, k, g, \lambda)$. We give new lower bounds for the order of extremal edge-girth-regular graphs using properties of the eigenvalues of the adjacency matrix of a graph.
\section{Introduction}\label{one}
        This paper deals with simple, finite graphs, i.e., undirected graphs with no loops and no multiple edges.A graph is $k-$regular if every vertex has exactly $k$ distinct neighbours. It is of girth $g$ if its smallest cycle consists of $g$ edges. A $g-$cycle or girth cycle is a cycle of length $g$. The number of vertices is called the order of the graph.\\
In extremal graph theory, one often considers a problem with the following type: we fix some graph parameters or some graph properties and want to deduce the extremal number of another parameter (in many cases, the number of vertices or edges). The problem considered in our paper is motivated by the cage problem. Jajcay, Kiss, and Miklavič \cite{origin} defined a new type of regularity.
\begin{definition}
        An edge-girth-regular graph is a k-regular graph of order $n$, girth $g$, and with the property that each of its edges is contained in exactly $\lambda$ distinct $g-$cycles. They are denoted by $egr(n, k, g, \lambda)$.\\
        An $egr(n, k, g, \lambda)$ is called extremal for the triple $(k, g, \lambda)$ if $n$ is the smallest order of any $egr(n, k, g, \lambda)$. We denote the order of this extremal graph with $n(k,g,\lambda)$. If $G$ is an extremal bipartite $egr(n,k,g,\lambda)$, then we denote its order with $n_2(k,g,\lambda)$.
    \end{definition}
     The widely studied cage problem consists of finding $k-$regular graph of girth $g$ and minimal order, which we call a $(k,g)-$cage. For $k=2$ the $(k,g)-$cages are the $g-$cycles so without loss of generality, we assume that $k>2$ throughout this paper.
    \begin{theorem}[Moore bound \cite{moore}]
    Let $G$ be a $k-$regular graph of girth $g$ with $k>2$. Then the order of $G$ is at least $n_0(k,g)$, where
    $$n_0(k,g)= 
\begin{cases}
\frac{k(k-1)^{\frac{g-1}{2}}-2}{k-2}, \quad\text{if g is odd;}\\
\frac{2(k-1)^\frac{g}{2}-2}{k-2}, \quad\text{~~if g is even.}
\end{cases}$$
    \end{theorem}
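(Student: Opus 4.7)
The plan is to give a direct BFS counting argument, handled separately for odd and even girth.

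\textbf{Odd case ($g=2d+1$).} I would fix an arbitrary vertex $v\in V(G)$ and let $N_i$ denote the set of vertices at graph-distance exactly $i$ from $v$. The key claim is that the ball $B_d(v)=\bigcup_{i=0}^{d} N_i$ spans a tree rooted at $v$: if two distinct shortest paths from $v$ reached a common vertex $w$ at depth $i\leq d$, or if two vertices of $B_d(v)$ at depths $i,j\leq d$ were joined by a ``horizontal'' edge, then concatenating shortest paths through $v$ would yield a closed walk of length at most $2d=g-1$, and such a walk must contain a cycle strictly shorter than $g$, contradicting the girth hypothesis. Once the tree structure is established, the counting is mechanical: $|N_0|=1$, $|N_1|=k$, and $|N_i|=(k-1)|N_{i-1}|$ for $1\leq i\leq d$ because every vertex in $N_{i-1}$ has exactly one neighbour in $N_{i-2}$ (its parent) and the remaining $k-1$ neighbours must lie in $N_i$. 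Summing the geometric series gives
\[
n\geq 1+k\sum_{i=0}^{d-1}(k-1)^i=\frac{k(k-1)^{d}-2}{k-2},
\]
which is the stated bound with $d=(g-1)/2$.

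\textbf{Even case ($g=2d$).} Here I would anchor the BFS at an \emph{edge} $uv\in E(G)$ rather than a vertex. Let $T_u$ be the set of vertices whose unique closest endpoint of $\{u,v\}$ is $u$ and which lie within distance $d-1$ of $u$; define $T_v$ analogously. I would first argue that both $T_u$ and $T_v$ are trees by the same short-cycle argument as above (their internal depths are at most $d-1$, so a collision produces a cycle of length at most $2(d-1)<g$). Then I would verify that $T_u\cap T_v=\emptyset$: a vertex $w\neq u,v$ lying in both would give paths from $w$ to $u$ of length $i\leq d-1$ and from $w$ to $v$ of length $j\leq d-1$, and combining with the edge $uv$ yields a closed walk of length $i+j+1\leq 2d-1<g$, again containing a too-short cycle. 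Counting each tree as $1+(k-1)+(k-1)^2+\cdots+(k-1)^{d-1}$ and doubling yields
\[
n\geq 2\cdot\frac{(k-1)^{d}-1}{k-2}=\frac{2(k-1)^{g/2}-2}{k-2}.
\]

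\textbf{Main obstacle.} The arithmetic is routine; the only genuine care needed is in the cycle-exclusion step, namely converting a closed walk of length $<g$ (obtained by joining two shortest paths) into an actual cycle of length $<g$. I would justify this by noting that any closed walk of length $\ell\geq 3$ in a simple graph contains a cycle of length at most $\ell$, so that shortest-path collisions inside the BFS balls are incompatible with the girth hypothesis. With that lemma invoked, the tree structure in both parities follows immediately and the bounds drop out of the geometric sums above.
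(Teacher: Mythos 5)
The paper does not prove this theorem at all: it is the classical Moore bound, stated with a citation to Biggs, so there is no internal proof to compare against. Your BFS/distance-partition argument is the standard textbook proof, and its overall architecture (vertex-rooted count for odd girth, edge-rooted count for even girth, geometric series) is correct. However, two of the specific claims you lean on are false as written and need repair.

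First, in the odd case the ball $B_d(v)$ does \emph{not} in general span a tree. An edge joining two vertices both at depth $d$ yields, together with the two shortest paths back to $v$, a closed walk of length $2d+1=g$, not ``at most $2d$'' as you assert; such edges not only can occur but must occur whenever $v$ lies on a girth cycle (in the Petersen graph, $k=3$, $g=5$, the ball of radius $2$ is the whole graph). The count is nevertheless saved, because the recursion $|N_i|=(k-1)|N_{i-1}|$ for $1\le i\le d$ never uses edge-freeness of the outermost layer $N_d$: it only needs that each vertex of $N_{i}$ has a unique neighbour in $N_{i-1}$ (no cycle of length $\le 2i\le 2d=g-1$) and that $N_{i-1}$ is independent for $i-1\le d-1$ (no cycle of length $\le 2(i-1)+1\le g-2$). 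You should state and use exactly these two facts rather than the global tree claim. Second, your cycle-extraction lemma ``any closed walk of length $\ell\ge 3$ in a simple graph contains a cycle of length at most $\ell$'' is false: the walk $u,v,u,v,u$ has length $4$ and contains no cycle. The correct tool is that the union of two \emph{distinct} paths with the same endpoints contains a cycle of length at most the sum of their lengths (equivalently, a closed walk that traverses some edge exactly once contains a cycle through that edge). All the collisions you exhibit do produce two genuinely distinct paths, or a closed walk using the edge $uv$ exactly once, so the argument goes through once the lemma is replaced by its correct form. With these two repairs the proof is complete and is the standard one.
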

    If a $G$ graph is a $(k,g)-$cage of order $n_0(k,g)$, then it is called a Moore cage.
        The motivation behind the definition of edge-girth-regularity is the fact that all of the Moore cages are also edge-girth-regular graphs.\\
        Recently, Drglin, Filipovski, Jajcay, and Raiman in \cite{bound} improved the lower bounds of the order of edge-girth-regular graphs.
    \begin{theorem}[Drglin, Filipovski, Jajcay, Raiman]{\label{lowerbound}}
Let $k$ and $g$ be a fixed pair of integers greater than or equal to $3$, and let $\lambda\le(k-1)^\frac{g-1}{2}$, if $g$ is odd and  $\lambda\le(k-1)^\frac{g}{2}$ if $g$ is even. Then
$$n(k,g,\lambda)\ge n_0(k,g)+ 
\begin{cases}
(k-1)^\frac{g-1}{2}-\lambda, \quad\text{if g is odd;}\\
\left\lceil2\frac{(k-1)^\frac{g}{2}-\lambda}{k}\right\rceil, \quad\text{~~if g is even.}
\end{cases}$$
Moreover, 
$$n_2(k,g,\lambda)\ge n_0(k,g)+2\left\lceil\frac{(k-1)^\frac{g}{2}-\lambda}{k}\right\rceil.$$
\end{theorem}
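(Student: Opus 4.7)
The plan is to fix an edge $uv$ in a hypothetical $egr(n,k,g,\lambda)$, track where the $\lambda$ girth cycles through $uv$ are forced to sit inside the Moore-type BFS neighborhood, and deduce that any remaining ``cycle deficit'' must produce vertices outside the Moore sphere.

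For the odd case $g=2d+1$, the first step will be to identify $\lambda$ with the number of vertices $w$ satisfying $d(u,w)=d(v,w)=d$. Girth $2d+1$ forces unique shortest paths of length $\le d$, so removing the edge from any girth cycle through $uv$ yields a path of length $2d$ whose midpoint $w$ is such a vertex, and conversely any such $w$ rebuilds a unique girth cycle by concatenating the two length-$d$ geodesics with $uv$. Next, since the BFS out of $u$ is a tree up to depth $d$, the last level $L_d(u)$ contains exactly $k(k-1)^{d-1}$ vertices, which I will split by distance to $v$: the subtree rooted at $v$ contributes $(k-1)^{d-1}$ vertices at distance $d-1$ from $v$, the set of middle vertices contributes exactly $\lambda$ vertices at distance $d$, and the remaining $(k-1)^d-\lambda$ must lie at distance $d+1$ from $v$. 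Swapping $u$ and $v$, a symmetric argument yields $(k-1)^d-\lambda$ vertices in $L_d(v)$ at distance $d+1$ from $u$; these automatically lie outside $N_{\le d}(u)$, so $n\ge n_0(k,g)+(k-1)^d-\lambda$.

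For the even case $g=2d$, the plan is to BFS simultaneously from both endpoints of $uv$, building a ``double tree'' whose levels $0,\ldots,d-1$ account for exactly $n_0(k,g)$ vertices and whose level $d-1$ splits into a $u$-side and a $v$-side of size $(k-1)^{d-1}$ each. Every level-$(d-1)$ vertex sends $k-1$ downward edges; a girth-$2d$ argument rules out edges within one side (they would close an odd cycle of length $2d-1$) and edges back to earlier levels, so each such edge either crosses to the opposite side at level $d-1$ or reaches a new vertex at level $d$. The crossing edges are in bijection with girth cycles through $uv$, accounting for $\lambda$ of them, so $2((k-1)^d-\lambda)$ edge-endpoints remain to be absorbed by new level-$d$ vertices; since each such vertex has degree $k$, a pigeonhole argument forces at least $\lceil 2((k-1)^d-\lambda)/k\rceil$ of them. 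For the bipartite refinement I will use that downward edges from the $u$-side and the $v$-side land in opposite parts of the bipartition, so the new vertices created on the two sides are disjoint and each side demands $\lceil((k-1)^d-\lambda)/k\rceil$ new vertices, producing the factor of two.

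The routine ingredients are the tree structure of BFS up to the relevant depth and the elementary triangle-style distance inequalities; the main conceptual step is the bijection between girth cycles through $uv$ and the distinguished ``middle vertices'' (odd case) or ``crossing edges'' (even case), together with the verification that the standing hypothesis $\lambda\le(k-1)^{(g-1)/2}$ (resp.\ $\lambda\le(k-1)^{g/2}$) keeps the deficit nonnegative and makes the decomposition of $L_d(u)$ (resp.\ the pigeonhole at level $d$) actually force new vertices to appear.
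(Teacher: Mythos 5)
This statement is quoted by the paper from the reference of Drglin, Filipovski, Jajcay and Raiman and is not proved in the paper itself, so there is no internal proof to compare against. Your reconstruction is correct and is essentially the argument of that source: the distance partition from an edge $uv$, the bijection between girth cycles through $uv$ and the ``middle'' vertices at distance $\frac{g-1}{2}$ from both endpoints (odd case) respectively the crossing edges between the two depth-$\left(\frac{g}{2}-1\right)$ sides of the double tree (even case), and the pigeonhole count of excess vertices, including the bipartition argument that separates the new vertices reached from the two sides. The only points left implicit are routine girth checks (e.g.\ that the two length-$\frac{g-1}{2}$ geodesics to a middle vertex are internally disjoint, so that concatenation really yields a cycle, and that downward edges from level $\frac{g}{2}-1$ cannot return to earlier levels); these are standard and do not constitute gaps.
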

\begin{remark}
This theorem gives us a $\Theta\left(k^{\left[\frac{g}{2}\right]-1}\right)$ lower bound for the order of extremal $egr(n,k,g,\lambda)$ graphs.
\end{remark}
        The paper is organized as follows. In Section \ref{two}, we introduce new families of edge-girth-regular graphs arising from generalized quadrangles and pencils of elliptic quadrics. In Section \ref{four}, we prove that the second family of edge-girth-regular graphs constructed in Section \ref{two} is extremal. Here we use the properties of the eigenvalues of the adjacency matrix of a graph to give new lower bounds for the order of extremal edge-girth regular graphs of even girth. In Section \ref{five}, with a useful lemma, we extend the lower bounds to edge-girth-regular graphs of odd girth.
\section{New families of edge-girth regular graphs}\label{two}
This section introduces two infinite families of bipartite edge-girth-regular graphs arising from generalized quadrangles and pencils of elliptic quadrics. For a detailed introduction to generalized polygons, ovoids, spreads and quadrics, we refer the reader to the books \cite{ksz} and \cite{payne}.
\begin{definition}
Let $\Pi$ be a finite projective plane of order $q$. A biaffine plane is obtained from $\Pi$ by choosing a point-line pair $(P,l)$ and deleting $P$, $\ell$, all the lines incident with $P$ and all the points belonging to $\ell$. If the point-line pair is incident in $\Pi$, then we call the biaffine plane type 1, otherwise, type 2.
\end{definition}
It was proved by Araujo-Pardo and Leemans \cite{araujo} that the incidence graph of a biaffine plane of type 1 is an extremal bipartite edge-girth-regular graph.
\begin{theorem}[Araujo-Pardo, Leemans]
The incidence graph $\mathcal{B}_q$ of the biaffine plane of order $q$ of type 1 is an extremal $egr(2q^2,q,6,(q-1)^2(q-2))$ graph.
\end{theorem}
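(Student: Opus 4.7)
The plan is to check directly that $\bq$ realizes the parameters $(2q^2,\,q,\,6,\,(q-1)^2(q-2))$ and then deduce extremality from the bipartite lower bound of Theorem \ref{lowerbound}, since $\bq$ itself is bipartite. I would split the verification into the basic parameters (order, regularity, girth) and the more delicate count of $6$-cycles per edge.

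For the order and regularity, I would start from the fact that $\Pi$ has $q^2+q+1$ points and the same number of lines. The type-1 biaffine construction removes the $q+1$ points of $\ell$ together with the $q+1$ lines through $P$, leaving $q^2$ points and $q^2$ lines, hence $|V(\bq)|=2q^2$. A surviving point $Q$ is on $q+1$ lines of $\Pi$, exactly one of which passes through $P$ and is discarded, so $\deg(Q)=q$; the dual count gives the same for lines. The girth is immediate: $\bq$ is a subgraph of the incidence graph of $\Pi$, whose girth is $6$, and affine triangles surviving the deletions show that $6$-cycles still exist for $q\ge 3$.

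The core step is to count $\lambda$ through an arbitrary incident pair $(Q,m)$. A $6$-cycle through the edge $\{Q,m\}$ is determined by a second point $R$ on $m$, a second line $\ell_2$ through $Q$, and a third line $\ell_1$ through $R$; the remaining vertex is $T=\ell_1\cap\ell_2$. The conditions that all four auxiliary incidences lie in $\bq$ translate into: $R\in m$, $R\ne Q$, and $R\notin\ell$, giving $q-1$ choices; $\ell_2\ni Q$, $\ell_2\ne m$, and $P\notin\ell_2$, giving $q-1$ choices; and finally $\ell_1\ni R$, $\ell_1\ne m$, $P\notin\ell_1$, $T\notin\ell$, which amounts to removing from the pencil of $q+1$ lines through $R$ the three lines $m$, $RP$, and $R(\ell_2\cap\ell)$. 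The only delicate part, and really the main bookkeeping obstacle, is verifying that these three lines are pairwise distinct: $RP\ne m$ because $P\notin m$; $R(\ell_2\cap\ell)\ne m$ because otherwise $m,\ell,\ell_2$ would be concurrent at a point of $\ell$, contradicting $Q=m\cap\ell_2\notin\ell$; and $R(\ell_2\cap\ell)\ne RP$ because $R,\ell_2\cap\ell,P$ collinear would force $R\in\ell$. Hence $q-2$ choices of $\ell_1$ remain and $\lambda=(q-1)^2(q-2)$.

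For extremality I would apply the bipartite estimate in Theorem \ref{lowerbound}. With $k=q$, $g=6$, $\lambda=(q-1)^2(q-2)$, the Moore bound equals $n_0(q,6)=2(q^2-q+1)$, while the deficit is $(k-1)^{g/2}-\lambda=(q-1)^3-(q-1)^2(q-2)=(q-1)^2=q(q-2)+1$, giving $\lceil(q-1)^2/q\rceil=q-1$. The bipartite bound therefore reads $n_2(q,6,(q-1)^2(q-2))\ge 2(q^2-q+1)+2(q-1)=2q^2$, which matches $|V(\bq)|$ exactly and establishes the extremality of $\bq$.
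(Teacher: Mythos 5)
Your proof is correct and complete. The paper itself states this result without proof (it is quoted from Araujo-Pardo and Leemans \cite{araujo}), but your argument is exactly the expected one: the parameter verification, including the careful check that the three excluded lines through $R$ are pairwise distinct, is sound, and your extremality computation via the bipartite bound of Theorem \ref{lowerbound} is precisely the calculation the paper carries out for the analogous type-2 biaffine plane.
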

It was proved by Kiss, Miklavič, and Szőnyi \cite{kiss} that the other type of biaffine plane also gives us a new family of edge-girth-regular. We show that this graph is also extremal.
\begin{theorem}
The incidence graph $\mathcal{B}_q$ of a biaffine plane of order $q$ of type 2 is an extremal $egr(2q^2-2,q,6,(q-1)(q^2-3q+3))$ graph.
\end{theorem}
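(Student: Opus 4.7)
The plan is to verify the parameters of $\mathcal{B}_q$ one by one and then invoke the bipartite lower bound of Theorem~\ref{lowerbound} to conclude extremality. Throughout I work inside the ambient projective plane $\Pi$ of order $q$, using that $P\notin\ell$ so that the $q+2$ deleted points (resp.\ lines) are pairwise distinct.

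First I would handle the easy parameters. Counting survivors gives $q^2-1$ points and $q^2-1$ lines, hence $|V(\mathcal{B}_q)|=2q^2-2$. A surviving point $p$ lies on $q+1$ lines of $\Pi$ of which only $Pp$ is deleted, so $\mathcal{B}_q$ is $q$-regular; bipartiteness is inherited from the Levi graph of $\Pi$, and the girth equals $6$ since the incidence graph of $\Pi$ has no $4$-cycles and one can display a triangle of $\Pi$ with all vertices off $\ell$ and all sides not through $P$.

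The main work is computing $\lambda$. I would fix an edge $(p,m)$ of $\mathcal{B}_q$ and count triangles of $\Pi$ with vertex $p$, side $m$, and third vertex $p''$ and opposite side $p'p''$ both surviving. Parametrising by $p''$, a short incidence count — start with all points of $\Pi\setminus(m\cup\{p\})$, then subtract those on $\ell$, those biaffine on the deleted line $Pp$, and $P$ itself — yields $(q-1)^2$ admissible values of $p''$. For each such $p''$, the point $p'\in m$ must differ from $p$, avoid $\ell$, and satisfy $p'\notin Pp''$ (so that the side $p'p''$ avoids $P$); the first three constraints leave $q-1$ candidates and the fourth excludes at most one. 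The crucial case distinction concerns the deleted line $\ell_0:=P(m\cap\ell)$: if $p''\in\ell_0$ then $Pp''=\ell_0$ meets $m$ in the already forbidden point $m\cap\ell$ and all $q-1$ candidates survive, whereas if $p''\notin\ell_0$ exactly one candidate is lost. A small check (using $P\notin m$ and $p\notin\ell$) shows that $\ell_0$ carries exactly $q-1$ admissible $p''$, and combining gives
\[
\lambda \;=\; (q-1)(q-1) + (q-1)(q-2)(q-2) \;=\; (q-1)\bigl((q-1)+(q-2)^2\bigr) \;=\; (q-1)(q^2-3q+3).
\]

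Finally, for extremality I plug $k=q$, $g=6$ and this $\lambda$ into the bipartite bound of Theorem~\ref{lowerbound}. Since $(q-1)^3-(q-1)(q^2-3q+3)=(q-1)(q-2)\equiv 2\pmod q$ for $q\ge 3$, the ceiling in the bound equals $q-2$, so the bound evaluates to $n_0(q,6)+2(q-2)=2(q^2-q+1)+2(q-2)=2q^2-2$, which $\mathcal{B}_q$ attains. The main obstacle is the $\ell_0$-case split in stage two: the counts themselves are routine, but one must carefully rule out several degenerate coincidences (for instance $\ell_0=Pp$, or the bad $p'$ accidentally equalling $p$ or $m\cap\ell$), all of which are precluded by the non-incidence of $(P,\ell)$ combined with the hypotheses $P\notin m$ and $p\notin\ell$ on the chosen edge.
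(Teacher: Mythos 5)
Your proposal is correct, and for the part the paper actually proves it coincides with the paper's argument: extremality is obtained by substituting $k=q$, $g=6$, $\lambda=(q-1)(q^2-3q+3)$ into the bipartite bound of Theorem~\ref{lowerbound}, simplifying $(q-1)^3-(q-1)(q^2-3q+3)=(q-1)(q-2)$, evaluating the ceiling as $q-2$, and landing on $2q^2-2$, exactly as you do. The difference is one of scope: the paper takes the fact that $\mathcal{B}_q$ is an $egr(2q^2-2,q,6,(q-1)(q^2-3q+3))$ graph as known, citing Kiss, Miklavi\v{c} and Sz\H{o}nyi, so its proof consists solely of the two-line lower-bound computation; you instead reprove the parameters from scratch. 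Your verification is sound: the order and regularity counts are right, and your triangle count --- $(q-1)^2$ admissible third vertices $p''$, of which the $q-1$ lying on $\ell_0=P(m\cap\ell)$ admit $q-1$ choices of $p'$ and the remaining $(q-1)(q-2)$ admit $q-2$ --- correctly yields $(q-1)\bigl((q-1)+(q-2)^2\bigr)=(q-1)(q^2-3q+3)$. This buys self-containedness at the cost of length; the paper's citation buys brevity but leaves the reader to consult \cite{kiss} for the $\lambda$ computation you carried out explicitly.
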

\begin{proof}
The lower bound in Theorem \ref{lowerbound} gives that 
$$n_2(q,6,(q-1)(q^2-3q+3))\ge 2(q^2-q+1)+2\left\lceil\frac{(q-1)^3-(q-1)(q^2-3q+3))}{q}\right\rceil=$$
$$=2(q^2-q+1)+2(q-2)=2q^2-2.$$
The order of $\mathcal{B}_q$ is exactly $2q^2-2$. Hence it is an extremal bipartite edge-girth-regular graph. 
\end{proof}
The previous results can be extended to generalized quadrangles in the natural way. However, only the case of type $1$  gives us a new family of edge-girth-regular graphs.
\begin{lemma}\label{const}
Let $\mathcal{Q}_q$ be a generalized quadrangle of order $(q,q)$, $P$ a point and $\{e_0,e_1,\ldots,e_q\}$ the set of lines incident with $P$. Consider the incidence graph $\mathcal{C}_q$ that is obtained from the generalized quadrangle by deleting $P$, all the points belonging to the set of lines $\{e_0,e_1,\ldots,e_q\}$, and all the lines that are incident with a point belonging to $e_0$. Then $\mathcal{Q}_q$ is a $q-$regular graph of order $2q^3$ and girth $8$.
\end{lemma}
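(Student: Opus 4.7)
The plan is to verify the three claimed properties of $\mathcal{C}_q$ in order—order $2q^3$, $q$-regularity, and girth $8$—using only the standard axioms of a generalized quadrangle of order $(q,q)$: $(q+1)(q^2+1)$ points and lines, the unique-perpendicular axiom, and girth $8$ of the full incidence graph.

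For the order, the deleted points form exactly the union of the $q+1$ lines through $P$, which has $1+(q+1)q=q^2+q+1$ elements. The deleted lines consist of the pencil $\{e_0,\ldots,e_q\}$ together with, for each of the $q$ points of $e_0\setminus\{P\}$, the $q$ lines through it other than $e_0$; these contribute disjointly, since any repetition would force two distinct points of $e_0$ to share a line other than $e_0$, violating the fact that two points of the GQ lie on at most one common line. Hence the deleted lines also number $q^2+q+1$, and both sides retain $(q+1)(q^2+1)-(q^2+q+1)=q^3$ vertices, giving $|V(\mathcal{C}_q)|=2q^3$.

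The pivot for $q$-regularity is that girth $8$ of the GQ forbids a point from having two distinct lines each meeting $e_0$ at distinct points. For a surviving point $R$, since $R\notin e_0$, the unique-perpendicular axiom produces a unique $R^*\in e_0$ collinear with $R$; the line $RR^*$ is the only line through $R$ meeting $e_0$, as a second such line would close up with $e_0$ into a $6$-cycle in the incidence graph. Hence exactly one of the $q+1$ lines at $R$ is deleted, so $\deg_{\mathcal{C}_q}(R)=q$. Dually, for a surviving line $\ell$ the unique perpendicular from $P$ to $\ell$ pinpoints the sole point of $\ell$ on some $e_i$, $i\ge 1$ (a second such point would again yield a $6$-cycle), so $\ell$ retains exactly $q$ points.

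For the girth, the inclusion of $\mathcal{C}_q$ into the incidence graph of $\mathcal{Q}_q$ gives girth $\ge 8$ at once. To rule out girth $\ge 10$, I would apply the Moore bound for bipartite $q$-regular graphs: girth $\ge 10$ would force $2q^3\ge 2\bigl((q-1)^4+(q-1)^3+(q-1)^2+(q-1)+1\bigr)$, an inequality that fails for every $q\ge 3$ (at $q=3$ the right side is $62>54$, and for $q\ge 4$ already the single term $(q-1)^4$ exceeds $q^3$). Therefore $\mathcal{C}_q$ has girth exactly $8$. I expect the $q$-regularity step to be the main obstacle—one must consistently invoke the GQ girth-$8$ property to rule out the short configurations that would spoil the neighbor counts—while the order count and girth upper bound are essentially routine.
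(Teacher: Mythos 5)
Your proof is correct and follows essentially the same route as the paper: the same inclusion--exclusion count of deleted points and lines (the paper deletes $1+(q+1)q$ points and $q+1+q^2$ lines), and the same use of the unique-perpendicular axiom of a generalized quadrangle to show that each surviving point loses exactly one of its $q+1$ lines and each surviving line loses exactly one of its $q+1$ points. The one place you go beyond the paper is the girth: the paper only argues girth $\ge 8$ from the embedding of $\mathcal{C}_q$ into the incidence graph of $\mathcal{Q}_q$, whereas your Moore-bound argument excluding girth $\ge 10$ actually pins the girth down to exactly $8$ (the paper leaves this to the positive $8$-cycle count in the theorem that follows).
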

\begin{proof}
By definition, $\mathcal{C}_q$ is obtained by deleting $1+(q+1)q$ points and $q+1+q^2$ lines, hence $\mathcal{C}_q$ has $2(q^3+q^2+q+1)-2(q^2+q+1)=2q^3$ vertices. 
We show now that $\mathcal{C}_q$ is $q-$regular. Note that for every non-incident point-line pair $(P,l)\in\mathcal{Q}_q$ there exists a unique point-line pair $(P',l')$ such that $P\mathrm{I}l'\mathrm{I}P'\mathrm{I}l$, where $\mathrm{I}$ denotes the incidence relation. We can use this fact for regularity. Every deleted line has a point incident with $e_0$, hence every point has exactly one deleted line because of the previous fact. On the other hand, every deleted point has a line incident with $P$, hence every point has exactly one deleted line. These induce that $\mathcal{C}_q$ is $q-$regular because the incidence graph of $\mathcal{Q}_q$ is $(q+1)-$regular. The $\mathcal{C}_q$ graph is contained in the incidence graph of $\mathcal{Q}_q$, hence the length of the minimal cycle of $\mathcal{C}_q$ is at least $8$.\\
\end{proof}
\begin{theorem}
Suppose a generalized quadrangle of order $(q,q)$ exists. Then there exists an $egr(2q^3,q,8,(q-1)^2((q-2)^2+1)$ graph.
\end{theorem}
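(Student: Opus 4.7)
By Lemma \ref{const}, $\mathcal{C}_q$ is $q$-regular on $2q^3$ vertices with girth at least $8$. Since $(q-1)^2((q-2)^2+1)>0$ for $q\geq 2$, it suffices to show that every edge of $\mathcal{C}_q$ lies in exactly $(q-1)^2((q-2)^2+1)$ cycles of length $8$; this will simultaneously force the girth to equal $8$ and yield the stated edge-girth-regularity.

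Fix an arbitrary edge $\{Q,m\}$ of $\mathcal{C}_q$. An $8$-cycle through this edge corresponds to a quadrilateral $(Q,Q_1,Q_2,Q_3)$ in $\mathcal{Q}_q$ with consecutive sides $m,m_1,m_2,m_3$, parameterized by the non-backtracking traversal $Q\to m\to Q_1\to m_1\to Q_2\to m_2\to Q_3\to m_3\to Q$. The GQ axiom that each non-incident point-line pair $(X,\ell)$ admits a unique point of $\ell$ collinear with $X$ forces, at the final step, $Q_3$ to be the unique point of $m_2$ collinear with $Q$ and $m_3$ to equal $QQ_3$; together with girth $8$ of $\mathcal{Q}_q$, which guarantees that the vertices in any non-backtracking walk of length at most $8$ are distinct, this shows that in the full incidence graph of $\mathcal{Q}_q$ each edge lies in $q^4$ $8$-cycles, one per choice of the intermediate quadruple $(Q_1,m_1,Q_2,m_2)$.

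I then plan to repeat the count inside $\mathcal{C}_q$. By the $q$-regularity established in Lemma \ref{const}, at each intermediate step exactly one option is ruled out: at a non-deleted point $X$ the unique deleted line through $X$ is $Xp_0(X)$, where $p_0(X)$ denotes the unique point of $e_0$ collinear with $X$; dually, at a non-deleted line $\ell$ the unique deleted point of $\ell$ is the one collinear with $P$. Consequently there are $(q-1)^4$ candidate tuples $(Q_1,m_1,Q_2,m_2)$ within $\mathcal{C}_q$. It remains to subtract those for which the forced terminal pair $(Q_3,m_3)$ is deleted, namely $Q_3\sim P$ or $m_3\cap e_0\neq\emptyset$.

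The technical heart of the argument, and what I expect to be the main obstacle, is this final subtraction. I would translate the two failure modes into incidence conditions on $Q_3$: because $Q_3\in m_2$ and $m_2\cap e_0=\emptyset$, the condition $Q_3\sim P$ reduces to $Q_3\in e_i$ for some $i\geq 1$, while $m_3\cap e_0\neq\emptyset$ is equivalent to $m_3=Qp_0(Q)$. Invoking the trace identity $|\{X,Y\}^\perp|=q+1$ for non-collinear points $X,Y$ keeps the intersection of each forbidden locus with the relevant trace to at most one point, so an inclusion-exclusion organizes itself by a case analysis according to which of the distinguished loci coincide. I expect the bookkeeping to yield exactly $2(q-1)^2(q-2)$ bad tuples, whence the edge-girth-regularity constant is
\[
(q-1)^4-2(q-1)^2(q-2)=(q-1)^2\bigl((q-2)^2+1\bigr),
\]
as claimed.
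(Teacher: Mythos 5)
Your setup is sound and, at the level of strategy, matches the paper: both arguments fix an edge, count the candidate extensions step by step using the fact (from Lemma \ref{const}) that each surviving vertex loses exactly one neighbour, and then subtract the extensions whose forced completion is deleted. Your bookkeeping frame is in fact cleaner than the paper's: the count of $(q-1)^4$ candidate tuples is correct, the observation that the terminal pair $(Q_3,m_3)$ is forced by the GQ axiom is correct, and the algebraic identity $(q-1)^4-2(q-1)^2(q-2)=(q-1)^2\bigl((q-2)^2+1\bigr)$ checks out. (By contrast, the paper folds one of the two exclusions into its forward count, arriving at $(q-1)^2((q-2)^2+q-1)$, and then subtracts $(q-1)^2(q-2)$ for the cycles whose penultimate line is deleted; the total amount subtracted relative to $(q-1)^4$ is indeed $2(q-1)^2(q-2)$, consistent with your target.)

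However, there is a genuine gap: the entire content of the theorem is the claim that exactly $2(q-1)^2(q-2)$ of the $(q-1)^4$ tuples are bad, and you do not prove it --- you state that you ``expect the bookkeeping to yield'' this number. That number appears to be reverse-engineered from the stated value of $\lambda$ rather than derived. Carrying out the subtraction is nontrivial: one must count, for each of the two failure modes ($Q_3\sim P$, and $m_3=Qp_0(Q)$), the number of tuples $(Q_1,m_1,Q_2,m_2)$ of \emph{surviving} vertices that produce it, and this forces a reverse count from the deleted end in which further coincidences arise (e.g.\ whether the unique point of $m$ collinear with the deleted point of $m_2$ coincides with the already-excluded deleted point of $m$, whether the forced $m_1$ or $Q_2$ is itself deleted, and the overlap case $Q_3=p_0(Q)$ where both failure modes occur at once). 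This is exactly the computation the paper performs in its second counting step, and until you execute it --- showing that the two failure modes contribute $(q-1)^2(q-2)$ each, or otherwise that their union has size $2(q-1)^2(q-2)$ --- the proof is incomplete.
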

\begin{proof}
We show that every edge of the graph $\mathcal{C}_q$ constructed in the previous lemma is contained in exactly $(q-1)^2((q-2)^2+1)$ $8-$cycles. Let $C=(u_1,u_2,u_3,u_4,u_5,u_6,u_7,u_8)$ be a $8-$cycle. We count how many unchanged $8-$cycles contain the edge $(u_1,u_2)$. Without loss of generality, we may assume that $u_1$ is a line and $u_2$ is a point. Then we  have $(q-1)$ possible choices for the  neighbours $u_3\ne u_1$ and $u_4\ne u_2$. 
\begin{center}

\includegraphics[scale=0.5]{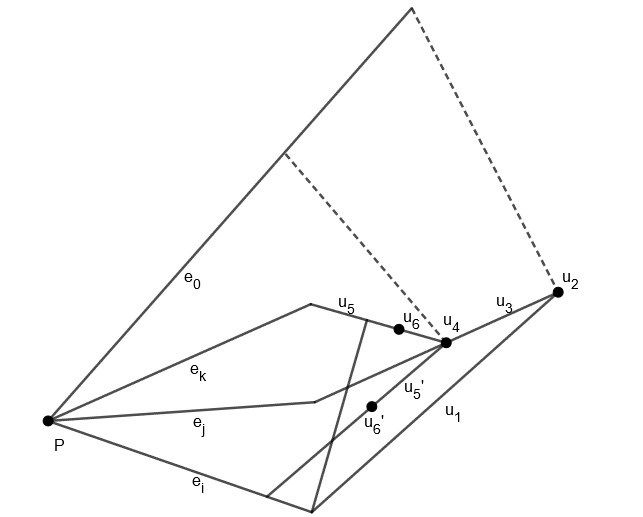} 

\end{center}
There are two types of $u_5$: one intersects the same line $e_i$ that is connected with $u_1$ and the other $q-1$ possible choices. Let us denote that unique line by $u_5'$ and any other neighbour of $u_4$ by $u_5$. The intersection of $e_i$ and $u_0$ can be connected uniquely with $u_5$, hence we have $q-2$ choices for $u_6$, but $u_6$ and $u_1$ uniquely define $u_7$ and $u_8$. Similarly, we have $q-1$ possible choices for the neighbour of $u_5'$ and $u_6'$ and $u_1$ uniquely define $u_7$ and $u_8$. So far, we have $(q-1)^2((q-2)^2+q-1)$ different $8-$cycles that contain the edge $(u_1,u_2)$. Unfortunately, the line $u_7$ is sometimes deleted. We have to count these cases and subtract the result to get the exact value.
\begin{center}

\includegraphics[scale=0.5]{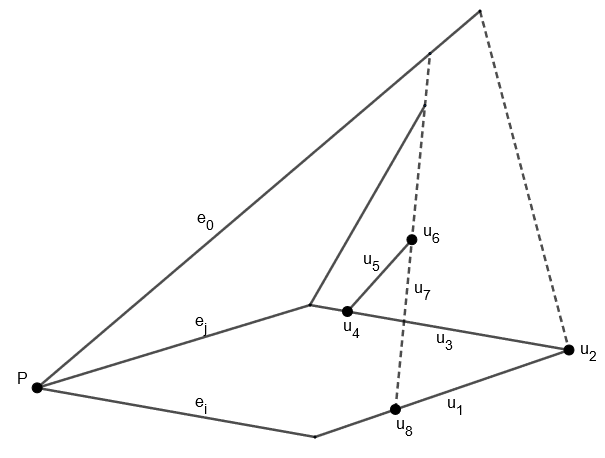} 

\end{center}
We have $(q-1)$ possible choices for a neighbour $u_3\ne u_1$ and a neighbour $u_8\ne u_2$. There is a unique deleted line through $u_8$, we denote it by $u_7$. The intersection of $e_j$ and $u_3$ can be connected uniquely with $u_7$, hence we have $q-2$ choices for $u_6$. Now, $u_4$ and $u_5$ are uniquely defined by $u_3$ and $u_6$. So we have $(q-1)^2(q-2)$ distinct $8-$cycles that contain the edge $(u_1,u_2)$ when $u_7$ is deleted. Therefore the number of $8-$cycles that contain a certain edge in $C_q$ is
$$(q-1)^2((q-2)^2+q-1)-(q-1)^2(q-2)=(q-1)^2((q-2)^2+1).$$
\end{proof}
\begin{corollary}
If $q$ is a prime power, then there exists a generalized quadrangle of order $(q,q)$ therefore there exists an $egr(2q^3,q,8,(q-1)^2((q-2)^2+1)$ graph.
\end{corollary}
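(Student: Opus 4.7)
The plan is to observe that this corollary is essentially an immediate consequence of the preceding theorem, once we invoke a standard existence result from finite geometry. The preceding theorem produces an $egr(2q^3,q,8,(q-1)^2((q-2)^2+1))$ graph from any generalized quadrangle of order $(q,q)$, so the only thing to verify is that such a generalized quadrangle exists whenever $q$ is a prime power.

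First I would recall the two classical constructions of generalized quadrangles of order $(q,q)$ that are available for every prime power $q$: the symplectic quadrangle $W(q)$, whose points are all points of $\mathrm{PG}(3,q)$ and whose lines are the totally isotropic lines with respect to a symplectic polarity, and the parabolic quadrangle $Q(4,q)$, whose points and lines are those of a nonsingular parabolic quadric in $\mathrm{PG}(4,q)$. Both are known to satisfy the axioms of a generalized quadrangle of order $(q,q)$; this is standard and can be found in the references \cite{ksz} and \cite{payne} already cited in the paper.

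Having secured the existence of $\mathcal{Q}_q$, I would then apply Lemma \ref{const} to build the graph $\mathcal{C}_q$ and invoke the previous theorem to conclude that $\mathcal{C}_q$ is an $egr(2q^3,q,8,(q-1)^2((q-2)^2+1))$ graph. There is no real obstacle here: the work has already been done in the theorem, and the corollary merely records that the hypothesis is satisfied for every prime power $q$. The proof I envisage is therefore a single sentence citing the existence of $W(q)$ (or equivalently $Q(4,q)$) and referring back to the previous theorem.
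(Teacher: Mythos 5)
Your proposal is correct and matches the paper's (implicit) reasoning exactly: the paper states this corollary without a written proof, relying on the standard existence of generalized quadrangles of order $(q,q)$ for prime powers $q$ (e.g.\ $W(q)$ or $Q(4,q)$, as in \cite{ksz} and \cite{payne}) combined with the preceding theorem. Nothing further is needed.
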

\begin{remark}
Theorem \ref{lowerbound} gives the lower bound
$$n_2\left(q,8,(q-1)^2((q-2)^2+1))\right)\ge 2(q^3-6q+10).$$
The excess is $2(6q-10).$ We can get a slightly better result in a very special case.
\end{remark}
\begin{theorem}[Kiss, Miklavič, Szőnyi \cite{kiss}]
Let $\mathcal{G}=(\mathcal{P},\mathcal{L},\mathrm{I})$ be a generalized quadrangle of order $q$ which admits both an ovoid $\mathcal{O}$ (a set of $q^2 +1$ points, no two of which are collinear) and a spread $\mathcal{S}$ (a set of $q^2+1$ line, no two of which intersect). Delete the points of $\mathcal{O}$ and the lines of  $\mathcal{S}$. Then the Levi graph of $(\mathcal{P}\setminus\mathcal{O},\mathcal{L}\setminus\mathcal{S},\mathrm{I})$ is an $egr(2q(q^2+1),q,8,(q-1)^2(q-2)^2).$
\end{theorem}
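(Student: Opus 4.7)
The plan is to verify, in order, the order $2q(q^2+1)$, the $q$-regularity, the girth $8$, and the value $\lambda = (q-1)^2(q-2)^2$. The first three are immediate from the defining properties of the ovoid (every line meets $\mathcal{O}$ in exactly one point) and the spread (every point lies on exactly one line of $\mathcal{S}$): the deletion removes $q^2+1$ points and $q^2+1$ lines, leaving $2q(q^2+1)$ vertices in the Levi graph; each surviving point (resp.\ line) loses exactly one of its $q+1$ incident lines (resp.\ points), namely the spread line through it (resp.\ the ovoid point on it), and so retains degree $q$; and since the Levi graph is an induced subgraph of the incidence graph of $\mathcal{G}$, whose girth is $8$, its girth is at least $8$, with the existence of an $8$-cycle following from the positive count below.

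For the $\lambda$-count, I fix an edge $(u_1, u_2)$ with $u_1 \in \mathcal{L}\setminus\mathcal{S}$ a line and $u_2 \in \mathcal{P}\setminus\mathcal{O}$ a point, and write an $8$-cycle through it as $(u_1, u_2, u_3, u_4, u_5, u_6, u_7, u_8)$. Mimicking the strategy of the previous theorem, I parametrize such cycles by the quadruple $(u_3, u_4, u_8, u_7)$. The girth of $\mathcal{G}$ being $8$ forces $u_4 \notin u_7$ (otherwise $u_2, u_3, u_4, u_7, u_8, u_1$ would form a $6$-cycle in the incidence graph), and then the generalized quadrangle axiom applied to the non-incident pair $(u_4, u_7)$ uniquely determines $u_5$ as the line through $u_4$ meeting $u_7$ and $u_6 := u_5 \cap u_7$. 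Writing $s_P$ for the unique spread line through a point $P$ and $o_\ell$ for the unique ovoid point on a line $\ell$, the spread/ovoid hypotheses give exactly $q-1$ admissible values for each of $u_3, u_4, u_8, u_7$ (from the $q+1$ neighbours of the preceding vertex one is excluded by the cycle and one by the spread or ovoid), yielding $(q-1)^4$ tuples.

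The proof then proceeds by two-event inclusion-exclusion. Let $X$ and $Y$ be the sets of tuples with $u_5 \in \mathcal{S}$ and $u_6 \in \mathcal{O}$ respectively, so that $\lambda = (q-1)^4 - |X| - |Y| + |X \cap Y|$. The condition $u_5 \in \mathcal{S}$ is equivalent to $u_5 = s_{u_4}$, i.e.\ $s_{u_4} \cap u_7 \ne \emptyset$. Splitting on whether $s_{u_4}$ meets $u_1$, and using the facts that $s_{u_4}\ne s_{u_2}$ and (by a short girth-$8$ argument) $o_{u_1}\notin s_{u_4}$, I would show that for each fixed $(u_3, u_4)$ exactly $q-1$ of the $(q-1)^2$ pairs $(u_8, u_7)$ are bad, giving $|X| = (q-1)^3$. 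An analogous argument, parametrising instead by $(u_8, u_7)$ and controlling on whether $u_2$ is collinear with $o_{u_7}$, gives $|Y| = (q-1)^3$.

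The hard step is $|X \cap Y|$. The joint condition reduces to $o_{u_7} \in s_{u_4}$: writing $P^* := o_{u_7}$ one has $s_{P^*} = s_{u_4}$, so tuples in $X \cap Y$ are indexed by $P^* \in \mathcal{O}$, with $u_4$ forced to be the unique point of $s_{P^*}$ collinear with $u_2$ and $u_7$ the unique line through $P^*$ meeting $u_1$; each $P^*$ therefore contributes at most one tuple. The admissible $P^*$ are precisely those not collinear with $u_2$ for which $s_{P^*}$ is disjoint from $u_1$; each of these two obstruction sets has size $q+1$, and the main step is to show that their intersection is exactly $\{o_{u_1}, o_{s_{u_2}}\}$, a set of size $2$. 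This is the point where the girth really matters: if any line $\ell \ni u_2$ with $\ell \ne u_1, s_{u_2}$ had $s_{o_\ell}$ meeting $u_1$ at a point $Q$, then $u_2, u_1, Q, s_{o_\ell}, o_\ell, \ell$ would form a $6$-cycle in the incidence graph of $\mathcal{G}$, contradicting girth $8$. Consequently $|X \cap Y| = (q^2+1) - (2(q+1)-2) = (q-1)^2$, and inclusion-exclusion yields $\lambda = (q-1)^4 - 2(q-1)^3 + (q-1)^2 = (q-1)^2(q-2)^2$, as claimed.
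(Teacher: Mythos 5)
This theorem is stated in the paper as a quoted result of Kiss, Miklavi\v{c} and Sz\H{o}nyi and is given no proof there, so there is no in-paper argument to compare against; the closest analogue is the paper's proof of its own generalized-quadrangle construction (the graph $\mathcal{C}_q$), which counts surviving $8$-cycles by a case split on the vertex $u_5$ followed by a single subtraction for deleted $u_7$'s. Your proof is a correct, self-contained argument and is structurally cleaner: parametrizing the $8$-cycles through a fixed edge by $(u_3,u_4,u_8,u_7)$, with $(u_5,u_6)$ forced by the GQ axiom, is sound, and the symmetric two-event inclusion--exclusion is the natural bookkeeping here because a point set and a line set are deleted simultaneously. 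I checked the three counts: $|X|=(q-1)^3$ holds (in fact your ``Case B'' where $s_{u_4}$ meets $u_1$ is vacuous, since the unique line through $u_4$ meeting $u_1$ is $u_3\notin\mathcal{S}$, so the split is unnecessary, though harmless); $|Y|=(q-1)^3$ holds dually (here $o_{u_7}$ is never collinear with $u_2$, by the no-triangle property applied to $u_1,u_7$ and a putative joining line); and the identification of the admissible $P^*=o_{u_7}$ for $|X\cap Y|$, with the two obstruction sets of size $q+1$ meeting exactly in $\{o_{u_1},o_{s_{u_2}}\}$ via your $6$-cycle argument, gives $|X\cap Y|=(q^2+1)-2(q+1)+2=(q-1)^2$ as claimed. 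The only point worth making explicit is that each admissible tuple really yields a genuine $8$-cycle, i.e.\ that the forced $u_5$ and $u_6$ are distinct from $u_1,u_3,u_7$ and $u_2,u_4,u_8$ respectively; this follows from the absence of digons, triangles and hexagons in the incidence graph of $\mathcal{G}$ (e.g.\ $u_5=u_3$ would force $u_3$ to meet $u_7$, producing a triangle on $u_1,u_3,u_7$), and one sentence to that effect would close the bijection between tuples and cycles.
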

\begin{remark}
Theorem \ref{lowerbound} gives the lower bound
$$n_2\left(q,8,(q-1)^2(q-2)^2)\right)\ge 2(q^3-5q+8).$$
The excess is $2(6q-8),$ and it is slightly better than the previous construction. However, the only known generalized quadrangles admitting both an ovoid and a spread are the generalized quadrangles $W(q)$ for $q$ even. The construction in Lemma \ref{const} works for any generalized quadrangle of order $(q,q)$.
\end{remark}
Our next construction is based on the following theorem whose proof can be found for example in \cite{Hirschfeld} or \cite{PI}. This is an extremal bipartite edge-girth-regular graph and is special in that its parameter $\lambda$ is relatively small compared to the previously seen graphs.
\begin{theorem}
The points of $PG(3,q)$ can be covered by the union of $q+1$ pairwise disjoint elliptic quadrics. The elements of the covering form a pencil of quadrics.
\end{theorem}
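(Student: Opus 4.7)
First I would observe that for any two disjoint elliptic quadrics $E_1=V(f_1)$ and $E_2=V(f_2)$ in $PG(3,q)$, the pencil $\{V(\lambda f_1+\mu f_2)\mid [\lambda:\mu]\in PG(1,q)\}$ has empty base locus, so its $q+1$ members are pairwise disjoint (any two of them meet precisely in the base locus) and cover $PG(3,q)$: every point $P$ satisfies $(f_1(P),f_2(P))\ne(0,0)$ and hence lies on the unique pencil member $V(f_2(P)f_1-f_1(P)f_2)$. Since $|PG(3,q)|=(q+1)(q^2+1)$, the $q+1$ members partition $PG(3,q)$.

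Next, I would show that if additionally no pencil member is degenerate, i.e.\ the discriminant quartic $D(\lambda,\mu):=\det(\lambda M_1+\mu M_2)$ (with $M_i$ the Gram matrix of $f_i$) has no $\mathbb{F}_q$-rational zero on $PG(1,q)$, then every pencil member is elliptic. Indeed, each member is then either elliptic ($q^2+1$ points) or hyperbolic ($(q+1)^2$ points). Letting $e$ denote the number of elliptic members (so $e\ge 2$), the partition identity
$$e(q^2+1)+(q+1-e)(q+1)^2 = (q+1)(q^2+1)$$
simplifies to $e=q+1$, giving the required ellipticity of all pencil members.

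Thus the theorem reduces to constructing two disjoint elliptic quadrics $E_1,E_2$ whose pencil contains no degenerate member. I would fix an irreducible polynomial $T^2+bT+c\in\mathbb{F}_q[T]$ with associated anisotropic binary form $h(y,z)=y^2+byz+cz^2$, so that $f_1(x)=x_0x_1+h(x_2,x_3)$ defines an elliptic quadric (its Gram-matrix discriminant $(b^2-4c)/16$ is a non-square in $\mathbb{F}_q^*$). For $f_2$ I would take $f_1\circ\phi$, where $\phi\in GL_4(\mathbb{F}_q)$ arises from multiplication by a primitive $(q+1)$-th root of unity inside $\mathbb{F}_{q^2}\cong\mathbb{F}_q[T]/(T^2+bT+c)$ acting on $\mathbb{F}_q^4\cong\mathbb{F}_{q^2}^2$ via the identification induced by $h$, so that $\phi$ belongs to a non-split torus preserving the elliptic form type of $f_1$.

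The main obstacle is verifying simultaneously that (a) $V(f_1)\cap V(f_1\circ\phi)=\emptyset$, and (b) the discriminant quartic $D(\lambda,\mu)$ has no $\mathbb{F}_q$-rational zero on $PG(1,q)$. Both reduce to a finite-field computation with the Gram matrix of $f_1$ and the matrix of $\phi$; the structural point that makes the computation tractable is that the cyclic symmetry coming from the non-split torus forces $D$ to factor as a non-square constant times the square of an irreducible binary quadratic in $(\lambda,\mu)$, which is automatically nonzero and non-square at every point of $PG(1,q)$. Once (a) and (b) are established, the reduction above yields the required pencil of $q+1$ pairwise disjoint elliptic quadrics partitioning $PG(3,q)$.
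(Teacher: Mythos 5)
The paper itself offers no proof of this theorem; it explicitly defers to \cite{Hirschfeld} and \cite{PI}, so there is no in-paper argument to measure you against, and your attempt has to stand on its own. Your reduction steps are fine: for a pencil with empty base locus the $q+1$ members are pairwise disjoint and cover $PG(3,q)$, and if in addition every member is nondegenerate, the count $e(q^2+1)+(q+1-e)(q+1)^2=(q+1)(q^2+1)$ does force $e=q+1$. But this reduces the theorem to precisely its hard core --- exhibiting two disjoint elliptic quadrics whose pencil contains no degenerate member --- and there you stop. Claims (a) and (b), and in particular the assertion that the non-split torus symmetry forces $D(\lambda,\mu)=\det(\lambda M_1+\mu M_2)$ to factor as a non-square constant times the square of an irreducible binary quadratic, are announced as ``a finite-field computation'' but never carried out. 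Since without that verification nothing has actually been constructed, this is a genuine gap rather than a routine omission.

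A second, independent problem is even characteristic. Your nondegeneracy and type criteria are phrased entirely through Gram matrices and discriminants (``$(b^2-4c)/16$ is a non-square in $\mathbb{F}_q^*$''), which is meaningless for $q$ even: there $4c=0$, every element of $\mathbb{F}_q^*$ is a square, and the symmetric bilinear form attached to a quadratic form is alternating, so $\det(\lambda M_1+\mu M_2)$ detects neither degeneracy nor the elliptic/hyperbolic type (one needs the quadratic form itself and the Arf invariant). Since Theorem \ref{new} uses this covering for every prime power $q$, the even case cannot be discarded. A route that avoids both difficulties is to define the pencil directly: identify $PG(3,q)$ with $\mathbb{F}_{q^4}^*/\mathbb{F}_q^*$ and set $Q_\alpha(x)=\mathrm{Tr}_{q^2/q}\bigl(\alpha\, x^{q^2+1}\bigr)$ for $\alpha\in\mathbb{F}_{q^2}^*/\mathbb{F}_q^*$; each point lies on exactly one $Q_\alpha$ because $\ker\mathrm{Tr}_{q^2/q}$ is a single point of $PG(1,q)$, so each $Q_\alpha$ has exactly $q^2+1$ points, no degenerate quadric of $PG(3,q)$ has that many points, and linearity of the trace in $\alpha$ makes the family a pencil. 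I would either supply the missing computations in your construction (separately for odd and even $q$) or switch to an argument of this latter kind.
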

An important property of elliptic quadrics is that they are also ovoids, so they do not have three collinear points.
\begin{theorem}{\label{new}}
Let $q$ be a prime power. Then there exists a bipartite  $egr(2(q^3+q^2+q+1),q^2+q+1,4,q^3+q^2)$ graph.
\end{theorem}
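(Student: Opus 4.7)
My plan is to take $G$ to be the Levi (point-plane) incidence graph of $PG(3,q)$: place the points of $PG(3,q)$ on one side of the bipartition and the planes on the other, joining a point $P$ to a plane $\pi$ by an edge precisely when $P\in\pi$. The cited pencil theorem enters through a natural reinterpretation of $G$. For a point $Q$, let $\mathcal{Q}(Q)$ be the unique pencil quadric through $Q$ and $T_Q$ its tangent plane at $Q$; the key observation is that $Q\mapsto T_Q$ is a bijection between points and planes of $PG(3,q)$. To verify this I will use that every elliptic quadric is an ovoid, so any plane meets any pencil quadric either in a conic of $q+1$ points or tangentially in a single point. Writing $a,b$ for the numbers of secant and tangent pencil quadrics for a fixed plane $\pi$, the identities $a+b=q+1$ and $a(q+1)+b=q^2+q+1$ force $a=q$, $b=1$; so every plane arises uniquely as some $T_Q$. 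This lets one describe $G$ equivalently as a bipartite graph on two copies of the point set of $PG(3,q)$.

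The parameter verification is then short. The number of points of $PG(3,q)$ equals the number of planes, both $q^3+q^2+q+1$, giving $|V(G)|=2(q^3+q^2+q+1)$. Each point lies on $q^2+q+1$ planes and each plane contains $q^2+q+1$ points, so $G$ is $(q^2+q+1)$-regular. Being bipartite, $G$ has only even cycles; any two distinct points lie on $q+1\ge 2$ common planes (the planes through the line they span), producing $4$-cycles, so $g(G)=4$.

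It remains to count $4$-cycles through a fixed edge $(P,\pi)$. Such a cycle has the form $P-\pi-Q-\sigma-P$ with $Q\in\pi\setminus\{P\}$ and $\sigma\neq\pi$ a plane containing both $P$ and $Q$. There are $q^2+q$ choices for $Q$, while the line $PQ$ lies in exactly $q+1$ planes, so $q$ choices remain for $\sigma$. This yields $(q^2+q)\cdot q=q^3+q^2$ four-cycles through $(P,\pi)$, and hence $\lambda=q^3+q^2$.

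The main subtlety is the pencil-based identification of planes with tangent planes to pencil quadrics, which rests on the ovoid property of elliptic quadrics; all remaining steps are direct incidence counts in $PG(3,q)$.
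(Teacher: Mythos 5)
Your proposal is correct and takes essentially the same route as the paper: the paper's graph is exactly the point--plane incidence graph of $PG(3,q)$ presented via the tangent-plane bijection of the pencil, and the paper proves that bijection by the same double count of plane sections and then counts the $4$-cycles through an edge identically ($q^2+q$ choices for the second point, then $q$ remaining planes through the joining line). You simply start from the Levi graph and invoke the pencil only to recast it on two copies of the point set, which is a presentational rather than a mathematical difference.
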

\begin{proof}
Let make two copies of the points of $PG(3,q)$ and denote the sets with $\mathcal{P}$ and $\mathcal{P}$', respectively. Similarly, $p\in\mathcal{P}$ and $p'\in\mathcal{P}'$ represent the same point. \\
By the previous theorem, we construct the edges of the graph in the following way. Every point $P$ is contained in exactly one ovoid. In this ovoid, there is a unique tangent plane $S_p$ through $p$. Let $(p,r')\in E$ if and only if $r\in S_p$. 
Now we show that this graph is a bipartite edge-girth-regular graph. \\
First, we prove that every plane is a tangent plane for exactly one point, therefore it is a bijection. Let $p$ be a point and $S_p$ its tangent plane. The other $q$ ovoids intersect $S_p$ in at most $(q+1)$ points since an oval of a plane contains $(q+1)$ points But if we use this upper bound, we get
$$1+q(q+1)=q^2+q+1.$$
It is the number of points of $S_p$, therefore it is indeed a bijection between the points and planes.
Now we focus on determining the parameters of the graph. It is bipartite and has a valency of $(q^2+q+1).$ (For every point $p$ $(p,p')\in E$.) \\
Since it is a bipartite graph, its girth must be even so now it is enough to show that every edge is contained in exactly $q^3+q^2$ distinct $4-$cycles. Suppose that $p_1p_2'\in E$, so equivalently $p_2\in S_{p_1}$. We finish the $4-$cycle $(p_1,p_2',p_3,p_4')$ and count the number of possible ways it can be done. For $p_4'\in S_{p_1}$ we $|S_p|-1=q^2+q$ possible choices since $p_2'$ is already taken. $p_2$ and $p_4$ is contained in exactly $q+1$ planes. Each plane is a tangent plane for a unique point, therefore for $p_3$ we have $q+1-1=q$ possible choices. Now we have that each edge is contained in exactly $(q^2+q)q=q^3+q^2$ distinct $4-$cycles.
\end{proof}
\begin{remark}
Theorem \ref{lowerbound} gives the lower bound 
$$n_2(q^2+q+1,4,q^3+q^2)\ge 4q^2+2q+2.$$
This is much lower than the orders of the previously constructed graphs. In the next section, we prove that this graph is an extremal bipartite edge-girth regular graph.
\end{remark}
\section{A new lower bound for the order of edge-girth regular graphs of even girth}\label{four}
We recall the most important properties of eigenvalues of adjacency matrices. The proofs of the following theorems can be found in \cite{eigenvalues}.
\begin{definition}
The adjacency matrix $A(G)$ of a simple graph $G=(V,E)$ is defined as follows: it is a symmetric matrix of size $|V|\times|V|$ labeled by the vertices of $G$, and 
$$A(G)_{u,v}=
\begin{cases}
1, ~if~ (u,v)\in E(G)\\
0, ~if~ (u,v)\not\in E(G)
\end{cases}$$
\end{definition}
\begin{proposition}
If $G$ is a simple graph, then the eigenvalues of its adjacency matrix $A$ satisfies $\sum \lambda_i = 0$ and $\sum \lambda_i^2= 2e(G)$, where $e(G)$ denotes the number of edges of G. \\In general, $\sum\lambda_i\ell$ counts the number of closed walks of length $\ell$.
\end{proposition}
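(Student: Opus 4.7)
The plan is to prove everything from the single identity $\operatorname{tr}(A^{\ell})=\sum_i \lambda_i^{\ell}$ combined with a combinatorial interpretation of the entries of $A^{\ell}$. Since $A=A(G)$ is real and symmetric, the spectral theorem gives an orthogonal basis of eigenvectors, so $A$ is conjugate to the diagonal matrix $\operatorname{diag}(\lambda_1,\ldots,\lambda_n)$. Because trace is invariant under conjugation and $(D^{\ell})_{ii}=\lambda_i^{\ell}$, I immediately obtain $\operatorname{tr}(A^{\ell})=\sum_i \lambda_i^{\ell}$ for every integer $\ell\ge 0$.

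Next I would establish the combinatorial meaning of the entries of $A^{\ell}$ by induction on $\ell$: the entry $(A^{\ell})_{uv}$ equals the number of walks of length $\ell$ from $u$ to $v$ in $G$. The base case $\ell=1$ is the definition of $A$, and the inductive step follows from
\[
(A^{\ell+1})_{uv}=\sum_{w\in V}(A^{\ell})_{uw}A_{wv},
\]
which sums, over all choices of penultimate vertex $w$ adjacent to $v$, the number of walks of length $\ell$ from $u$ to $w$. In particular $(A^{\ell})_{uu}$ counts the closed walks of length $\ell$ based at $u$, and summing over $u$ gives
\[
\operatorname{tr}(A^{\ell})=\sum_{u\in V}(A^{\ell})_{uu}=\#\{\text{closed walks of length }\ell\text{ in }G\}.
\]
Combined with the spectral identity, this is exactly the general claim $\sum_i \lambda_i^{\ell}=\#\{\text{closed walks of length }\ell\}$ (reading the apparent typo $\sum \lambda_i\ell$ as $\sum \lambda_i^{\ell}$).

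Finally I would specialise to $\ell=1,2$. For $\ell=1$, since $G$ has no loops, every diagonal entry of $A$ is $0$, so $\sum_i \lambda_i=\operatorname{tr}(A)=0$. For $\ell=2$, the diagonal entry $(A^{2})_{uu}$ counts walks $u\to w\to u$, one per neighbour $w$ of $u$, hence equals $\deg(u)$; summing gives
\[
\sum_i \lambda_i^{2}=\operatorname{tr}(A^{2})=\sum_{u\in V}\deg(u)=2e(G)
\]
by the handshake lemma. There is no real obstacle here: the only thing one has to be careful about is distinguishing walks (where revisits are allowed) from paths, and in particular noting that the counted closed walks need not be cycles, which matters when this proposition is later applied to relate spectra to girth-cycle counts.
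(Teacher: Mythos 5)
Your proof is correct and complete; the paper itself does not prove this proposition but simply defers to the cited textbook of Brouwer and Haemers, and your argument (spectral theorem giving $\operatorname{tr}(A^{\ell})=\sum_i\lambda_i^{\ell}$, induction showing $(A^{\ell})_{uv}$ counts walks of length $\ell$ from $u$ to $v$, then specialising to $\ell=1,2$ via the zero diagonal and the handshake lemma) is exactly the standard proof found there. You also correctly read the typo $\sum\lambda_i\ell$ as $\sum\lambda_i^{\ell}$ and rightly flag the walk-versus-cycle distinction that matters in the paper's later applications.
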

Let us now focus on the largest and the smallest eigenvalues denoted by $\lambda_1$ and $\lambda_n$, respectively.
\begin{theorem}
(a) We have $|\lambda_n| \le \lambda_1$.\\
(b) Let $G$ be a connected graph and assume that $-\lambda_n$ = $\lambda_1$. Then $G$ is bipartite.\\
(c) $G$ is a bipartite graph if and only if its spectrum is symmetric to $0$.
\end{theorem}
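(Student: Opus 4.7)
The plan is to prove all three parts via the Perron--Frobenius theorem applied to the non-negative symmetric matrix $A=A(G)$, combined with the identity $\sum_i \lambda_i^\ell = \mathrm{tr}(A^\ell) = \#\{\text{closed walks of length } \ell\}$ recorded in the preceding proposition. Part (a) is then essentially immediate: for a symmetric non-negative matrix the spectral radius $\rho(A) = \max_i|\lambda_i|$ is itself an eigenvalue, and since $A$ has non-negative entries this eigenvalue is the largest in the usual order, so $\lambda_1 = \rho(A) \geq |\lambda_n|$.

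For part (b), let $w$ be a unit eigenvector with $Aw = -\lambda_1 w$ and consider the coordinate-wise absolute value $|w|$. The triangle inequality gives
$$(A|w|)_i = \sum_j A_{ij}|w_j| \;\geq\; \left|\sum_j A_{ij}w_j\right| = \lambda_1 |w_i|,$$
so $A|w| \geq \lambda_1 |w|$ entrywise. Pairing with $|w|$ and using the Rayleigh characterisation of $\lambda_1$ forces equality throughout, so $|w|$ is itself a Perron eigenvector. Since $G$ is connected, $A$ is irreducible, and Perron--Frobenius then yields $|w|>0$, i.e.\ no coordinate of $w$ vanishes. The equality case of the triangle inequality at each vertex $i$ says that the values $w_j$ over the neighbours $j$ of $i$ all share a common sign; comparing with $(Aw)_i = -\lambda_1 w_i$, this sign is opposite to that of $w_i$. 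The partition $V^+ = \{v : w_v > 0\}$, $V^- = \{v : w_v < 0\}$ is therefore a bipartition of $G$.

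For part (c), in the forward direction I would write $A$ in the block form
$$A = \begin{pmatrix} 0 & B \\ B^T & 0 \end{pmatrix}$$
associated with the bipartition $(V_1, V_2)$; the involution $(x,y) \mapsto (x,-y)$ intertwines the eigenspaces for $\lambda$ and $-\lambda$ and thus gives a multiplicity-preserving bijection on the spectrum. Conversely, a symmetric spectrum forces $\sum_i \lambda_i^{2k+1} = 0$ for every $k\geq 0$, which by the trace formula means $G$ has no closed walks of odd length. Any odd cycle would produce such a walk, so $G$ is odd-cycle-free and hence bipartite.

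The main obstacle I expect is part (b): one must combine irreducibility (to force strict positivity of $|w|$) with the equality case of the triangle inequality in order to promote a purely spectral hypothesis on the extreme eigenvalues into a genuine combinatorial bipartition. Parts (a) and (c) are then largely bookkeeping on top of standard Perron--Frobenius and trace-of-powers facts.
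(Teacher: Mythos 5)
The paper does not prove this statement itself --- it is quoted as background with a pointer to Brouwer--Haemers \cite{eigenvalues} --- and your argument is precisely the standard proof found there: Perron--Frobenius for (a), the equality case of the triangle inequality applied to $|w|$ plus irreducibility for (b), and the block form $\bigl(\begin{smallmatrix}0&B\\B^T&0\end{smallmatrix}\bigr)$ together with the odd-closed-walk trace count for (c). The argument is correct (the only implicit hypothesis in (b) is $\lambda_1>0$, i.e.\ that $G$ has an edge, which is automatic in the paper's setting of $k$-regular graphs with $k>2$), so there is nothing to add.
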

\begin{theorem}\label{regular}
Let $G$ be a $k$-regular graph. Then $\lambda_1 = k$ and its multiplicity is the
number of components. Every eigenvector belonging to $k$ is constant on each component. In particular, the multiplicity of $\lambda_1$ is $1$ for connected graphs.
\end{theorem}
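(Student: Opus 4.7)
The plan is to prove the statement in three stages: first exhibit $k$ as an eigenvalue, then bound every eigenvalue by $k$, and finally analyse the $k$-eigenspace using the equality case of that bound.

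For the first stage I would note that $k$-regularity is equivalent to every row sum of $A(G)$ being $k$, so the all-ones vector $\mathbf{1}$ is an eigenvector with eigenvalue $k$. If $G$ has components $C_1,\dots,C_r$, then for each $i$ the indicator vector $\mathbf{1}_{C_i}$ is also a $k$-eigenvector (since within $C_i$ every vertex still has $k$ neighbours in $C_i$), producing $r$ linearly independent $k$-eigenvectors.

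For the second stage I would pick an arbitrary eigenvector $v$ with eigenvalue $\lambda$ and a vertex $u$ attaining $|v_u|=\max_x|v_x|$. The eigenvalue equation gives
\[
|\lambda|\,|v_u|=\Bigl|\sum_{w\sim u}v_w\Bigr|\le\sum_{w\sim u}|v_w|\le k|v_u|,
\]
so $|\lambda|\le k$; combined with the first stage this shows $\lambda_1=k$ and, incidentally, also the bound $|\lambda_n|\le\lambda_1$ used in the preceding theorem.

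For the third stage, suppose $Av=kv$ with $v\ne 0$. Restrict attention to a component $C$ on which $v$ is not identically zero, replace $v$ by $-v$ if necessary, and pick $u\in C$ with $v_u=\max_{x\in C}v_x>0$. The chain of inequalities above collapses to equalities, which forces $v_w=v_u$ for every neighbour $w$ of $u$; propagating along paths inside $C$ (using connectivity of $C$) shows $v$ is constant on $C$. Hence every $k$-eigenvector is a linear combination of the $\mathbf{1}_{C_i}$, so the multiplicity of $k$ equals the number of components, and equals $1$ precisely when $G$ is connected. The main subtlety I would be careful about is the sign normalisation in the equality analysis: one must choose the component and sign of $v$ so that the maximising vertex has strictly positive value, otherwise the tight form $\sum_{w\sim u}v_w=kv_u$ (rather than its absolute-value version) is not available, and the constancy conclusion fails.
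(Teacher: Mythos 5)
Your proof is correct: the all-ones and component-indicator vectors give $k$ as an eigenvalue with multiplicity at least the number of components, the maximum-entry argument bounds all eigenvalues by $k$ in absolute value, and the equality analysis (with the sign normalisation you rightly flag) shows any $k$-eigenvector is constant on each component, pinning the multiplicity down exactly. The paper itself gives no proof of this statement --- it is quoted as a standard fact with the proof deferred to the reference \cite{eigenvalues} --- and your argument is precisely the standard one found there, so there is nothing to compare beyond noting that it is complete and sound.
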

First, we consider the case $g=4$. Later on, we generalize it for even girth.
\begin{theorem}\label{egr4}
Let $G$ be an extremal $egr(n,k,4,\lambda)$ graph. Then
$$n=n(k,4,\lambda)\ge\frac{k^3-2k^2+2k-1+\lambda}{\lambda+k-1}.$$
If $G$ is an extremal bipartite $egr(n,k,4,\lambda)$ graph, then 
$$n=n_2(k,4,\lambda)\ge\frac{2(k^3-2k^2+2k-1+\lambda)}{\lambda+k-1}.$$
\end{theorem}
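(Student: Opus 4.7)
The plan is to express the fourth spectral moment $\sum_i\lambda_i^4=\mathrm{tr}(A^4)$ in terms of $(n,k,\lambda)$ by a combinatorial count, then apply the Cauchy--Schwarz inequality to the non-extremal eigenvalues to isolate $n$.

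First I would compute $\mathrm{tr}(A^4)$ by enumerating closed walks of length $4$. Fix a vertex $v_0$ and consider walks $(v_0,v_1,v_2,v_3,v_0)$. Because $G$ has girth $4$ and therefore no triangles, the only possible coincidences among the $v_i$ are $v_2=v_0$ or $v_1=v_3$; inclusion--exclusion on these two cases produces $k^2+k^2-k=2k^2-k$ such ``degenerate'' walks through $v_0$. Every remaining walk traces a genuine $4$-cycle through $v_0$ in one of two cyclic directions, contributing $2c_4(v_0)$, where $c_4(v_0)$ counts $4$-cycles through $v_0$. Summing over $v_0$ and using $4C_4=\lambda\cdot e(G)=\lambda nk/2$ (each edge lies on exactly $\lambda$ girth cycles), I would obtain
\[
\mathrm{tr}(A^4)\;=\;n(2k^2-k)+8C_4\;=\;nk(2k-1+\lambda).
\]

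Assuming $G$ is connected, Theorem~\ref{regular} gives that $\lambda_1=k$ is a simple eigenvalue of $A$. Together with $\sum_i\lambda_i^2=2e(G)=nk$, peeling $\lambda_1$ off yields
\[
\sum_{i\ge 2}\lambda_i^2=k(n-k), \qquad \sum_{i\ge 2}\lambda_i^4=nk(2k-1+\lambda)-k^4.
\]
Cauchy--Schwarz applied to the $n-1$ nonnegative numbers $\lambda_2^2,\dots,\lambda_n^2$ gives $\bigl(k(n-k)\bigr)^2\le(n-1)\bigl(nk(2k-1+\lambda)-k^4\bigr)$; a direct rearrangement (dividing by $nk$ and collecting like powers of $n$) isolates the desired inequality $n\ge (k^3-2k^2+2k-1+\lambda)/(\lambda+k-1)$.

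For the bipartite case, the symmetry of the spectrum about $0$ (recalled earlier in the paper) forces $\lambda_n=-k$ to be simple as well. I would peel off both $\lambda_1$ and $\lambda_n$, obtaining $\sum_{i=2}^{n-1}\lambda_i^2=k(n-2k)$ and $\sum_{i=2}^{n-1}\lambda_i^4=nk(2k-1+\lambda)-2k^4$, then apply Cauchy--Schwarz to the $n-2$ remaining squares; the identical manipulation doubles the numerator and yields the second bound. The main obstacle is the walk-count in the first step: one must carefully handle the degenerate walks alongside the fact that each $4$-cycle is traversed twice from each of its four vertices. The remainder is standard spectral bookkeeping and algebra.
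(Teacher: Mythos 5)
Your proposal is correct and follows essentially the same route as the paper: compute $\sum_i\lambda_i^2=nk$ and $\sum_i\lambda_i^4=nk(2k-1+\lambda)$ by counting closed $4$-walks (your inclusion--exclusion count $2k^2-k$ of degenerate walks and your incidence double count $8C_4=nk\lambda$ give exactly the paper's totals), peel off the trivial eigenvalue(s) $k$ (and $-k$ in the bipartite case), and apply Cauchy--Schwarz, which is the paper's AM--QM step in equivalent form. The algebraic rearrangement then yields the stated bounds, so no gap remains.
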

\begin{proof}
By Theorem \ref{regular}, $\lambda_1=k$ because $G$ is $k-$regular. The number of closed walks of length $\ell$ equals $\sum\lambda_i^\ell$. The number of closed walks of length two equals to twice the number of edges. If we count the number of incident (point, edge) pairs in two ways, we get $2|e(G)|=nk.$ Now we have
$$\sum_{i=1}^n\lambda_i^2=2|e(G)|=nk.$$
Similarly, we calculate the number of closed walks of length four. We have two cases depending on whether the closed walk is a circle or not. The number of $4$-cycles is $nk\lambda$, because we can start at any vertex, every vertex is incident with $k$ edges and every edge is contained in exactly $\lambda$ distinct $4-$cycles. If a closed walk of length four does not contain cycles, then its undirected edges form a tree with two edges. Choose any vertex for the first point of the closed walk. We have $k$ possible choices for the second vertex. If the third vertex is the first one, then we again have $k$ possible choices for the fourth vertex. If the first and third vertices are different, then the fourth vertex must be the same as the second one, thus the number of closed walks of length four
$$\sum_{i=1}^n\lambda_i^4=nk\lambda+nk^2+nk(k-1)=nk(\lambda+2k-1).$$
$G$ is $k-$regular therefore $\lambda_1=k$. Now we have the following equalities:
$$\sum_{i=2}^n\lambda_i^2=nk-k^2,$$
$$\sum_{i=2}^n\lambda_i^4=nk(\lambda+2k-1)-k^4.$$
Using the inequality between the arithmetic and quadratic means for the set $\{\lambda_2^2\ldots\lambda_n^2\}$ and some equivalent operations, we get the first inequality:
$$\left(\frac{\sum_{i=2}^n\lambda_i^2}{n-1}\right)^2\le\frac{\sum_{i=2}^n\lambda_i^4}{n-1},$$
$$\left(\frac{nk-k^2}{n-1}\right)^2\le \frac{nk(\lambda+2k-1)-k^4}{n-1},$$
$$(nk-k^2)^2\le (n-1)(nk(\lambda+2k-1)-k^4),$$
$$n^2k-2k^2n+k^3\le n^2(\lambda+2k-1)-n(\lambda+2k-1)-nk^3+k^3,$$
$$n(k^3-2k^2+2k-1+\lambda)\le n^2(\lambda+k-1),$$
$$\frac{k^3-2k^2+2k-1+\lambda}{\lambda+k-1}\le n.$$
If $G$ is $k-$regular and bipartite, then $\lambda_1=k=-\lambda_n$, hence we have the following equalities: 
$$\sum_{i=2}^{n-1}\lambda_i^2=nk-2k^2,$$
$$\sum_{i=2}^{n-1}\lambda_i^4=nk(\lambda+2k-1)-2k^4.$$
In the same way, using the inequality between the arithmetic and quadratic means for the set $\{\lambda_2^2\ldots\lambda_{n-1}^2\}$ and some equivalent operations, we get the second inequality:
$$\frac{2(k^3-2k^2+2k-1+\lambda)}{\lambda+k-1}\le n.$$
\end{proof}
\begin{corollary}
The complete bipartite graph $K_{k,k}$ is an extremal bipartite $egr(2k,k,4,(k-1)^2)$ graph.
\end{corollary}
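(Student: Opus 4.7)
The plan is to verify the corollary in two routine steps: first confirm that $K_{k,k}$ is indeed a bipartite $egr(2k,k,4,(k-1)^2)$ graph, and then check that it saturates the bipartite lower bound proved in Theorem \ref{egr4}.

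For the first step, I would observe that $K_{k,k}$ has $2k$ vertices, is $k$-regular, and is bipartite of girth $4$. To compute $\lambda$, fix an edge $(u,v)$ with $u,v$ on opposite sides of the bipartition. A $4$-cycle through $(u,v)$ is determined by choosing a second vertex $u'\ne u$ on $u$'s side and a second vertex $v'\ne v$ on $v$'s side; since $K_{k,k}$ contains every such edge, the number of completions is exactly $(k-1)(k-1)=(k-1)^2$. This confirms the parameter $\lambda=(k-1)^2$.

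For the second step, I would apply the bipartite inequality
\[
n_2(k,4,\lambda)\ge\frac{2(k^3-2k^2+2k-1+\lambda)}{\lambda+k-1}
\]
with $\lambda=(k-1)^2=k^2-2k+1$. The numerator becomes $2(k^3-2k^2+2k-1+k^2-2k+1)=2(k^3-k^2)=2k^2(k-1)$ and the denominator becomes $(k-1)^2+(k-1)=(k-1)k$. These cancel to give $n_2(k,4,(k-1)^2)\ge 2k$. Since $K_{k,k}$ has exactly $2k$ vertices, it attains the bound and hence is an extremal bipartite $egr(2k,k,4,(k-1)^2)$ graph.

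There is essentially no obstacle; the only point requiring care is the algebraic simplification, which works out cleanly because $\lambda+k-1=(k-1)^2+(k-1)$ factors as $k(k-1)$ and matches the factor $k(k-1)$ inside the numerator $2k^2(k-1)$. This is precisely the algebraic reason why Theorem \ref{egr4} is tight at $K_{k,k}$.
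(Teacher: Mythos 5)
Your proposal is correct and follows essentially the same route as the paper: substituting $\lambda=(k-1)^2$ into the bipartite bound of Theorem \ref{egr4} and simplifying to $2k$. The only difference is that you also explicitly verify that $K_{k,k}$ is an $egr(2k,k,4,(k-1)^2)$ graph, which the paper treats as already known; this is a harmless (indeed slightly more complete) addition.
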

\begin{proof}
The only property we have to check is that $K_{k,k}$ extremal. Theorem \ref{egr4} gives the lower bound
$$n_2(k,4,(k-1)^2)\ge2\frac{k^3-2k^2+2k-1+(k-1)^2}{(k-1)^2+k-1}=2\frac{k^3-k^2}{k^2-k}=2k.$$
\end{proof}
\begin{theorem}
The graph constructed in Theorem \ref{new} is an extremal bipartite\\
$egr(2(q^3+q^2+q+1),q^2+q+1,4,q^3+q^2)$ graph.
\end{theorem}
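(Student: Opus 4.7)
The plan is to invoke the bipartite lower bound from Theorem \ref{egr4} with parameters $k=q^2+q+1$ and $\lambda=q^3+q^2$ (i.e., the parameters of the graph constructed in Theorem \ref{new}) and verify that this bound exactly equals the order $2(q^3+q^2+q+1)$ of the constructed graph. Since the graph built in Theorem \ref{new} is bipartite, $q^2+q+1$-regular of girth $4$, and every edge lies in precisely $q^3+q^2$ distinct $4$-cycles, the inequality
$$n_2(k,4,\lambda)\ge\frac{2(k^3-2k^2+2k-1+\lambda)}{\lambda+k-1}$$
from Theorem \ref{egr4} applies. The goal reduces to a purely algebraic identity.

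First I would compute the denominator: substituting $\lambda=q^3+q^2$ and $k=q^2+q+1$ gives
$$\lambda+k-1=q^3+2q^2+q=q(q+1)^2.$$
Next I would expand the numerator $k^3-2k^2+2k-1+\lambda$. Computing $k^2=q^4+2q^3+3q^2+2q+1$ and $k^3=q^6+3q^5+6q^4+7q^3+6q^2+3q+1$, collecting terms, and adding $\lambda=q^3+q^2$ yields a polynomial of degree six in $q$, which I expect to factor cleanly as
$$q(q+1)^2(q^3+q^2+q+1).$$
The motivation for this guess is simply that the denominator already carries the factor $q(q+1)^2$, and for the construction to be extremal the quotient must equal $q^3+q^2+q+1$ (one-half the order of $\bq$ from Theorem \ref{new}). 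Dividing numerator by denominator then gives
$$n_2(q^2+q+1,4,q^3+q^2)\ge 2(q^3+q^2+q+1),$$
which matches the order of the constructed graph, so the graph is extremal.

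There is no real conceptual obstacle: the proof is a direct substitution into the lower bound proved in Theorem \ref{egr4}, followed by the verification that the numerator factors as $q(q+1)^2(q^3+q^2+q+1)$. The only care needed is in the polynomial arithmetic; one slick way to check the factorization without expanding $k^3$ by brute force is to recognise that $(q+1)^2 = k-q$ divides the numerator modulo the Cayley–Hamilton-style identity $k = q^2+q+1$, reducing the verification to checking equality after substituting $q=-1$ and comparing leading coefficients. Either route leads to the same quotient $q^3+q^2+q+1$.
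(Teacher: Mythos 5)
Your proof is correct and takes exactly the same route as the paper, which simply substitutes $k=q^2+q+1$ and $\lambda=q^3+q^2$ into the bipartite bound of Theorem \ref{egr4} and observes that the result is $2(q^3+q^2+q+1)$; your expansion of the numerator to $q^6+3q^5+4q^4+4q^3+3q^2+q = q(q+1)^2(q^3+q^2+q+1)$ checks out. The only slip is in your optional aside: $k-q = q^2+1 \neq (q+1)^2$, so that shortcut does not work as stated, but the direct polynomial arithmetic you carry out is valid and suffices.
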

\begin{proof}
The previous inequality gives a lower bound for $n_2(q^2+q+1,4,q^3+q^2)$ and it is exactly $2(q^3+q^2+q+1).$
\end{proof}
\begin{theorem}
áIf a bipartite $egr(n,k,4,\lambda)$ graph satisfies the inequality of Theorem \ref{egr4} with equality, then its spectrum is
$$\left\{-k^{(1)};-\sqrt{\frac{nk-2k^2}{n-2}}^{\left(\frac{n-2}{2}\right)};\sqrt{\frac{nk-2k^2}{n-2}}^{\left(\frac{n-2}{2}\right)};k^{(1)}\right\}.$$
\end{theorem}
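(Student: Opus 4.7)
The plan is to extract the equality case of the arithmetic--quadratic means inequality that was used in the proof of Theorem~\ref{egr4}. Recall that the bipartite bound was derived from the inequality
$$\left(\frac{\sum_{i=2}^{n-1}\lambda_i^2}{n-2}\right)^2 \le \frac{\sum_{i=2}^{n-1}\lambda_i^4}{n-2},$$
combined with the identities $\sum_{i=2}^{n-1}\lambda_i^2 = nk-2k^2$ and $\sum_{i=2}^{n-1}\lambda_i^4 = nk(\lambda+2k-1)-2k^4$, which hold because $\lambda_1=k$, $\lambda_n=-k$ in the bipartite $k$-regular case.

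First I would observe that equality in the AM--QM inequality forces the values $\lambda_2^2,\ldots,\lambda_{n-1}^2$ to be all equal. Solving for this common value from $\sum_{i=2}^{n-1}\lambda_i^2 = nk-2k^2$ yields
$$\lambda_i^2 = \frac{nk-2k^2}{n-2} \qquad (i=2,\ldots,n-1),$$
so each such $\lambda_i$ equals $\pm\sqrt{(nk-2k^2)/(n-2)}$. Next I would invoke the bipartite symmetry of the spectrum (part (c) of the theorem on eigenvalues of bipartite graphs): the multiset of eigenvalues is symmetric about $0$. Since $\lambda_1=k$ and $\lambda_n=-k$ already form a symmetric pair of simple eigenvalues (Theorem~\ref{regular} gives that the multiplicity of $k$ is one for the connected component, and the extremal graph is connected because the inequality is tight only for a single-component graph), the remaining $n-2$ eigenvalues must also pair up symmetrically. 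Therefore the value $+\sqrt{(nk-2k^2)/(n-2)}$ and its negative occur with equal multiplicity, each equal to $(n-2)/2$. This yields exactly the claimed spectrum.

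The only subtlety is justifying connectedness so that $\lambda_1=k$ is simple: if $G$ were disconnected, $k$ would have multiplicity at least two, and the symmetric counterpart $-k$ also, leaving only $n-4$ middle eigenvalues, and a separate AM--QM argument would not produce the exact lower bound. Hence in the equality case $G$ must be connected, which makes the spectrum come out as stated. Beyond this, the argument is essentially the equality clause of a single AM--QM application and requires no further calculation.
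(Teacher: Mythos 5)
Your proposal matches the paper's proof essentially step for step: both extract the equality case of the AM--QM inequality to force $\lambda_2^2=\cdots=\lambda_{n-1}^2=\frac{nk-2k^2}{n-2}$, and both then invoke the symmetry of a bipartite spectrum to split the multiplicities evenly. Your added remark on connectedness (to guarantee that $k$ is a simple eigenvalue) is a small point the paper leaves implicit, but it does not change the argument.
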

\begin{proof}
The proof of Theorem \ref{egr4} is based on the inequality between the arithmetic and quadratic means. Thus equality holds if and only if $\lambda_2^2=\ldots=\lambda_{n-1}^2$. We know the sum of these numbers:
$$\sum_{i=2}^{n-1}\lambda_i^2=nk-2k^2.$$
Hence
$$(n-2)\lambda_2^2=nk-2k^2,$$
$$\lambda_2^2=\frac{nk-2k^2}{n-2}.$$
The spectrum of a bipartite graph is symmetric to $0$, therefore if $G$ is an extremal bipartite $egr(n,k,4,\lambda)$ graph and its order satisfies the inequality in Theorem \ref{egr4}, then the spectrum of $G$ is
$$\left\{-k^{(1)};-\sqrt{\frac{nk-2k^2}{n-2}}^{\left(\frac{n-2}{2}\right)};\sqrt{\frac{nk-2k^2}{n-2}}^{\left(\frac{n-2}{2}\right)};k^{(1)}\right\}.$$
\end{proof}
\begin{corollary}
The spectrum of the graph constructed in Theorem \ref{new} is
$$\left\{-(q^2+q+1)^{(1)};-q^{\left(q^3+q^2+q\right)};q^{\left(q^3+q^2+q\right)};(q^2+q+1)^{(1)}\right\}.$$
\end{corollary}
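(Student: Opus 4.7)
The plan is to apply the previous theorem (which determines the full spectrum of any extremal bipartite $egr(n,k,4,\lambda)$ graph meeting the bound in Theorem~\ref{egr4} with equality) to the specific graph from Theorem~\ref{new}. The hypotheses are already in hand: in the theorem immediately preceding the corollary, we have shown that this graph is an extremal bipartite $egr(n,k,4,\lambda)$ with $n = 2(q^3+q^2+q+1)$, $k = q^2+q+1$, and $\lambda = q^3+q^2$, and that its order matches the lower bound of Theorem~\ref{egr4} with equality. Therefore the previous theorem applies and the spectrum takes the form
\[
\left\{-k^{(1)};\,-\sqrt{\tfrac{nk-2k^2}{n-2}}^{\left(\frac{n-2}{2}\right)};\,\sqrt{\tfrac{nk-2k^2}{n-2}}^{\left(\frac{n-2}{2}\right)};\,k^{(1)}\right\}.
\]

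All that remains is a short arithmetic verification. For the multiplicities, $\frac{n-2}{2} = q^3+q^2+q$, which is immediate. For the middle eigenvalues, I factor $n-2 = 2(q^3+q^2+q) = 2q(q^2+q+1) = 2qk$ and $nk - 2k^2 = 2k\bigl((q^3+q^2+q+1)-(q^2+q+1)\bigr) = 2kq^3$, so that
\[
\frac{nk - 2k^2}{n-2} \;=\; \frac{2kq^3}{2qk} \;=\; q^2,
\]
and hence $\sqrt{(nk-2k^2)/(n-2)} = q$. Substituting back gives exactly the claimed spectrum $\{-(q^2+q+1)^{(1)};\,-q^{(q^3+q^2+q)};\,q^{(q^3+q^2+q)};\,(q^2+q+1)^{(1)}\}$.

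There is no real obstacle here: the conceptual work was already done in the preceding theorems, and this corollary is a direct specialization. The only thing to be careful about is matching up the algebraic identities cleanly, in particular spotting the factorization $n-2 = 2qk$, which is what makes $(nk-2k^2)/(n-2)$ collapse to the clean value $q^2$.
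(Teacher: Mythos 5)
Your proposal is correct and is exactly the argument the paper intends: the corollary follows by specializing the preceding spectrum theorem to $n=2(q^3+q^2+q+1)$, $k=q^2+q+1$, and your computations $n-2=2qk$ and $nk-2k^2=2kq^3$, giving $(nk-2k^2)/(n-2)=q^2$, match what is needed. The multiplicity $(n-2)/2=q^3+q^2+q$ is likewise immediate, so nothing is missing.
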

\begin{remark}
This corollary also shows the strength of the inequality: if the spectrum of a bipartite edge-girth-regular graph of girth $4$ consists of four distinct eigenvalues, then the graph is an extremal bipartite edge-girth-regular graph.
\end{remark}
Theorem \ref{egr4} can be generalized for higher even girths, however, the results and the formulas are less elegant.
\begin{definition}
Let $G$ be an $egr(n,k,g,\lambda)$ graph. Let $c(\ell,k)$ denote the number of closed walks of length $\ell$ starting of a vertex $v$ that do not contain circles.
\end{definition}
\begin{proposition}
Let $\ell=2s$ be an even number. If $\ell\le g+1$, then $c(\ell,k)$ is a polynomial of $k$ of degree $\frac{\ell}{2}$ and its value does not depend on the choice of the vertex $v$. In this case the main coefficient of $c(2s,k)$ is the $s$-th Catalan number 
$$C_{s}=\binom{2s}{s}-\binom{2s}{s+1}.$$
Moreover, 
$$C_{s}k(k-1)^{s-1}\le c(2s,k)\le C_s k^s.$$
\end{proposition}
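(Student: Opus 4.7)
My plan is to identify $c(2s,k)$ with the number $f(2s,k)$ of closed walks of length $2s$ at (any) vertex of the infinite $k$-regular tree $T_k$, and then to compute $f(2s,k)$ via a Dyck-path decomposition. The bridge between $G$ and $T_k$ is the universal covering map $p:T_k\to G$: every closed walk $w$ at $v$ lifts uniquely to a walk $\tilde w$ on $T_k$ starting at a chosen preimage $\tilde v$, and $\tilde w$ is closed at $\tilde v$ if and only if $w$ is trivial in $\pi_1(G,v)$.

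The central step is to show that for $\ell\le g+1$, the walks counted by $c(\ell,k)$, namely those with tree edge set, are exactly the $\pi_1$-trivial closed walks at $v$. A tree-edge-set walk lies in a subgraph with trivial fundamental group and so is nullhomotopic. Conversely, any $\pi_1$-trivial walk is also trivial in $H_1(G)=\ker\partial$; for a graph this forces every oriented edge of the walk to be traversed as often forward as backward, so each edge in the support of $w$ is used at least twice. If the support contained a cycle $C$, then $\ell\ge 2|C|\ge 2g$, contradicting $\ell\le g+1<2g$. Lifting then yields a bijection $c(2s,k)=f(2s,k)$, which is independent of $v$ because $T_k$ is vertex-transitive.

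Finally, I would evaluate $f(2s,k)$ via the standard Dyck-path encoding. Recording the height $h_i=d_{T_k}(\tilde v,\tilde w_i)$ of a closed walk on $T_k$ produces a Dyck path $P$ of length $2s$; to reconstruct the walk from $P$ we must at each up-step specify which child of the current vertex to move to, giving $k$ choices at the root and $k-1$ choices at any non-root vertex. Writing $r(P)\ge 1$ for the number of up-steps taken from the root (equivalently, the number of returns of $P$ to level $0$), this gives
\[
f(2s,k) \;=\; \sum_{P}\, k^{r(P)}(k-1)^{s-r(P)},
\]
summed over the $C_s$ Dyck paths of length $2s$. Each summand is a polynomial of degree $s$ in $k$ with leading coefficient $1$, so $f(2s,k)$ is a polynomial of degree $s$ whose leading coefficient is $C_s$; and since $k(k-1)^{s-1}\le k^{r}(k-1)^{s-r}\le k^s$ for $1\le r\le s$, summing over the Dyck paths yields $C_s k(k-1)^{s-1}\le f(2s,k)\le C_s k^s$. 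The main obstacle is the bijection in the second paragraph; once it is in place, both the polynomial form and the inequalities fall out of the routine Dyck-path count.
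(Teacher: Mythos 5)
Your argument is correct, and its combinatorial core is the same as the paper's: decompose the tree-supported closed walks by their height profile (a Dyck path), note that each up-step offers $k$ choices at the root and $k-1$ choices elsewhere while each down-step is forced, and conclude that the leading coefficient is the number $C_s$ of Dyck paths and that each walk count lies between $k(k-1)^{s-1}$ and $k^s$. Where you genuinely differ is in how the reduction to a tree is justified. The paper works directly in $G$ and asserts that a cycle-free closed walk changes its ``distance from $v$'' by exactly $\pm1$ at each step, retracing the previous edge on a down-step; read literally as distance in $G$ this can fail at the boundary case $\ell=g+1$ (the $G$-distance may stall even though the distance within the tree carried by the walk does not), so the argument implicitly measures distance in that tree. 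Your route through the universal cover makes this automatic: you lift to $T_k$, and your $\pi_1$/$H_1$ argument --- null-homologous forces every edge traversed net-zero, hence at least twice, hence a cycle in the support would force $\ell\ge 2g>g+1$ --- cleanly identifies the tree-supported walks with the liftable ones precisely in the range $\ell\le g+1$, which is exactly where the proposition needs it. As a bonus you obtain the exact formula $c(2s,k)=\sum_P k^{r(P)}(k-1)^{s-r(P)}$ with $1\le r(P)\le s$, from which polynomiality, the Catalan leading coefficient, and the two-sided bound all follow termwise; the paper's version is more elementary and avoids covering-space machinery but leaves that one step informal. Both proofs are sound and establish the same statement.
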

\begin{proof}
If we consider the fact that at each step, we either increase the distance from $v$ by one or decrease it by one, then we get that in such a closed walk, we increased this distance $s-$times. If we decrease the distance, then the walk goes through the previous edge. If we increase the distance, we have $k$ possible choices when we are at the starting vertex, and $k-1$ possible choices otherwise. If the main coefficient is $C$, then we proved that $c(2s,k)$ is indeed a polynomial of $k$ of degree $s$ and
$$Ck(k-1)^{s-1}\le c(2s,k)\le Ck^s.$$
Now, we only need to show that the main coefficient is the $s-$th Catalan number. We prove it with a combinatorial interpretation of the Catalan numbers. It is a well-known folklore result that $C_s$ is the number of monotonic lattice paths along the edges of a grid with $s \times s$ square cells, which do not pass above the diagonal. A monotonic path starts in the lower left corner, finishes in the upper right corner, and consists entirely of edges pointing rightwards or upwards. Counting such paths is equivalent to counting the type of closed walks of length $2s$: increasing the distance stands for "move right" and decreasing the distance stands for "move up".
\end{proof}
\begin{remark}
We present the list of the first few $c$ polynomials:
$$
\begin{aligned}
c(2,k)=&k,\\
c(4,k)=&2k^2-k,\\
c(6,k)=&5k^3-6k^2+2k,\\
c(8,k)=&14k^4-28k^3+20k^2-5k.
\end{aligned}
$$
\end{remark}
Now, we ready to prove the main result of our paper.
\begin{theorem}
Let $G$ be an $egr(n,k,g,\lambda)$ graph, where $g$ is even.\\
If $g\equiv0$ (mod $4$), then
$$n(k,g,\lambda)\ge\frac{c(g,k)+k\lambda+k^{g}-2c(\frac{g}{2},k)k^{\frac{g}{2}}}{c(g,k)-c^2(\frac{g}{2},k)+k\lambda},$$
$$n_2(k,g,\lambda)\ge2\frac{c(g,k)+k\lambda+k^{g}-2c(\frac{g}{2},k)k^{\frac{g}{2}}}{c(g,k)-c^2(\frac{g}{2},k)+k\lambda}.$$
If $g\equiv2$ (mod $4$), then
$$n(k,g,\lambda)\ge\frac{c(g,k)+k\lambda+k^g}{c(g,k)+k\lambda},$$
$$n_2(k,g,\lambda)\ge\frac{2k^g}{c(g,k)+k\lambda}.$$
\end{theorem}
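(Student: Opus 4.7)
The plan is to lift the proof of Theorem \ref{egr4} from the $2$nd and $4$th spectral moments to the $(g/2)$-th and $g$-th moments, using the polynomial expressions $c(g/2,k)$ and $c(g,k)$ from the preceding proposition. All four bounds should fall out of a single QM--AM argument applied to $\{\lambda_i^{g/2}\}_{i\in I}$, except for one degenerate case that has to be handled separately.

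First I would compute the $g$-th moment. A closed walk of length $g$ in a graph of girth $g$ either contains no cycle (contributing $n\,c(g,k)$) or is a $g$-cycle traversed once; the latter count equals $2g\cdot\#\{g\text{-cycles}\}=nk\lambda$ by edge-girth-regularity, since $|E|\lambda/g$ counts the $g$-cycles and each contributes $2g$ starting vertex/direction pairs. Hence
$$\sum_{i=1}^n \lambda_i^g \;=\; n\bigl(c(g,k)+k\lambda\bigr).$$
Next I would compute the $(g/2)$-th moment. If $g\equiv 0\pmod 4$, then $g/2$ is even and strictly less than the girth, so every closed walk of length $g/2$ is tree-like and $\sum_i\lambda_i^{g/2}=n\,c(g/2,k)$. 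If $g\equiv 2\pmod 4$, then $g/2$ is odd; tree-like closed walks have even length and any cycle-containing walk has length at least $g>g/2$, so no closed walk of length $g/2$ exists and $\sum_i\lambda_i^{g/2}=0$.

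By Theorem \ref{regular} we have $\lambda_1=k$, and in the bipartite case also $\lambda_n=-k$. Subtracting these extremal contributions gives closed formulas for $\sum_{i\in I}\lambda_i^{g/2}$ and $\sum_{i\in I}\lambda_i^{g}$, with $I=\{2,\dots,n\}$ in the $k$-regular case and $I=\{2,\dots,n-1\}$ in the bipartite case. Inserting these into the QM--AM inequality
$$\Bigl(\sum_{i\in I}\lambda_i^{g/2}\Bigr)^2 \;\le\; |I|\sum_{i\in I}\lambda_i^{g}$$
and rearranging as a lower bound on $n$ would yield the claimed estimates in three of the four subcases: both $g\equiv 0\pmod 4$ bounds, and the $k$-regular $g\equiv 2\pmod 4$ bound (where the left-hand side reduces to $k^g$ because $\sum_i\lambda_i^{g/2}=0$ forces $\sum_{i\in I}\lambda_i^{g/2}=-k^{g/2}$).

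The remaining bipartite, $g\equiv 2\pmod 4$ case is precisely where QM--AM degenerates: the extremal contributions $\lambda_1^{g/2}=k^{g/2}$ and $\lambda_n^{g/2}=-k^{g/2}$ cancel, so $\sum_{i\in I}\lambda_i^{g/2}=0$ and the inequality becomes trivial. Here I would instead use the direct non-negativity $\sum_{i\in I}\lambda_i^{g}\ge 0$ (valid because $g$ is even, so each $\lambda_i^g\ge 0$), which rearranges to $n\ge 2k^g/(c(g,k)+k\lambda)$ exactly as stated. The main obstacle I anticipate is the algebraic bookkeeping needed to expand the QM--AM inequality into the exact rational form of the theorem, together with verifying that the denominator $c(g,k)+k\lambda-c(g/2,k)^2$ appearing in the $g\equiv 0\pmod 4$ bound is positive --- this follows from Cauchy--Schwarz applied across the full spectrum, $(\sum_i\lambda_i^{g/2})^2\le n\sum_i\lambda_i^{g}$.
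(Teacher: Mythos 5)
Your proposal is correct and follows essentially the same route as the paper: compute the $(g/2)$-th and $g$-th spectral moments via $c(g/2,k)$, $c(g,k)$ and the $nk\lambda$ count of girth-cycle walks, strip off the principal eigenvalue(s) $\pm k$, and apply the QM--AM inequality to $\{\lambda_i^{g/2}\}$ over the remaining spectrum, with the bipartite $g\equiv 2\pmod 4$ case degenerating to the non-negativity of the $g$-th moment sum. Your added remark on verifying positivity of the denominator $c(g,k)+k\lambda-c(g/2,k)^2$ via Cauchy--Schwarz over the full spectrum is a small point the paper leaves implicit, but it does not change the argument.
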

\begin{proof}
$G$ is $k-$regular therefore $\lambda_1=k$. If $G$ is bipartite, then $\lambda_n=-k$. Let $g=2s$. Similarly to the case of girth four, we use the inequality between the arithmetic and quadratic means for the set $\{\lambda_2^s\ldots \lambda_n^s\}$:
$$\left(\frac{nc(s,k)-k^s}{n-1}\right)^2=\left(\frac{\sum_{i=2}^n\lambda_i^s}{n-1}\right)^2\le\frac{\sum_{i=2}^n\lambda_i^{2s}}{n-1}=\frac{nc(2s,k)+nk\lambda-k^{2s}}{n-1}.$$
First, if $s$ is even, then we have the following inequality
$$(nc(s,k)-k^s)^2\le (n-1)(nc(2s,k)+nk\lambda-k^{2s}).$$
After some calculations, we can give a lower bound for the order of the graph:
$$n^2c^2(s,k)-2nc(s,k)k^s+k^{2s}\le n^2(c(2s,k)+k\lambda)-n(c(2s,k)+k\lambda+k^{2s})+k^{2s},$$
$$n(c(2s,k)+k\lambda+k^{2s}-2c(s,k)k^s)\le n^2(c(2s,k)+k\lambda-c^2(s,k)),$$
$$\frac{c(2s,k)+k\lambda+k^{2s}-2c(s,k)k^s}{c(2s,k)+k\lambda-c^2(s,k)}\le n(k,g,\lambda).$$
If $s$ is odd we can get the same result but we also know that in this case $c(s,k)=c(\frac{g}{2})=0$.\\
Let $G$ be a bipartite edge-girth regular graph of girth $2s$. Then we  apply the inequality between the arithmetic and quadratic means for the set  $\{\lambda_2^s\ldots\lambda_{n-1}^s\}$:
$$\left(\frac{nc(s,k)-k^s-(-k)^s}{n-2}\right)^2=\left(\frac{\sum_{i=2}^{n-1}\lambda_i^s}{n-2}\right)^2\le\frac{\sum_{i=2}^{n-2}\lambda_i^{2s}}{n-2}=\frac{nc(2s,k)+nk\lambda-2k^{2s}}{n-2}.$$
Begin with the case when $s$ is even, then we have the following inequality
$$(nc(s,k)-2k^s)^2\le (n-2)(nc(2s,k)+nk\lambda-2k^{2s})$$
With elementary calculations we get a lower bound for $n_2(k,2s,\lambda)$:
$$n^2c^2(s,k)-4nc(s,k)k^s+4k^{2s}\le n^2(c(2s,k)+k\lambda)-2n(c(2s,k)+k\lambda+k^{2s})+4k^{2s},$$
$$2n(c(2s,k)+k\lambda+k^{2s}-2c(s,k)k^s)\le n^2(c(2s,k)-c^2(s,k)+k\lambda),$$
$$2\frac{c(2s,k)+k\lambda+k^{2s}-2c(s,k)k^s}{c(2s,k)-c^2(s,k)+k\lambda}\le n_2(k,g,\lambda).$$
If $s$ is odd, then $c(s,k)=0$ and $(-k)^s=-k^s$, hence left-hand side of the inequality is $0$. This means that 
$$0\le n(c(2s,k)+k\lambda)-2k^{2s},$$
$$\frac{2k^{2s}}{c(2s,k)+k\lambda}\le n_2(k,g,\lambda).$$
\end{proof}
\begin{remark}
Recall that Theorem \ref{lowerbound} gives a $\Theta\left(k^{\left[\frac{g}{2}\right]-1}\right)$ lower bound for the order of extremal $egr(n,k,g,\lambda)$ graphs. The previous inequalities gives $O\left(k^{\left[\frac{g}{2}\right]-1}\right)$ lower bounds for the order of extremal $egr(n,k,g,\lambda)$ graphs of even girth.\\
On the other hand, if $\lambda=o(k^{\frac{g}{2}-1})$, then the inequalities give $\Theta\left(k^{\left[\frac{g}{2}\right]}\right)$  lower bounds. It means that if  $\lambda$ is small, then we have a lower bound for the order of extremal edge-girth-regular graphs of even girth that is around $k-$times larger than the previously known lower bound. 
\end{remark}
\section{A useful lemma and an inequality for edge-girth-regular graphs of odd girth}\label{five}
We conclude our paper with an upper bound for the number of $(g+1)-$cycles through a certain vertex of an $egr(n,k,g,\lambda)$ graph of odd girth and a corollary of this result.\\
With another application of the Cauchy-Schwarcz inequality, we can give a new lower bound of the order of edge-girth regular graphs of odd girth. First, we need to prove the following lemma.

\begin{lemma}{\label{cycles}}
Let $G$ be an $egr(n,k,g,\lambda)$ graph, where $g=2h+1$ is an odd number. Then every vertex is contained in at most\ $\binom{k}{2}\left((k-1)^h-\frac{\lambda}{k-1}\right)$ distinct $(g+1)-$cycles.
\end{lemma}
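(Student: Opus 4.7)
The plan is to count $(g+1)$-cycles through a fixed vertex $v$ by cutting each one at its midpoint, classifying midpoints by their distance from $v$, and using the edge-girth-regularity to count those midpoints that sit close to $v$.

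First, for each neighbour $u\in N(v)$ I would define $A_u$ to be the set of vertices $x$ with $d_G(u,x)=h$ whose (unique, by the girth condition $g=2h+1$) shortest $u$--$x$ path avoids $v$; a Moore-type count then gives $|A_u|=(k-1)^h$. A $(g+1)$-cycle through $v$ is determined by an unordered pair $\{u,w\}\subseteq N(v)$ together with a path of length $2h$ in $G-v$ joining $u$ and $w$, and the midpoint $x$ of such a path must lie in $A_u\cap A_w$; conversely any $x\in A_u\cap A_w$ reassembles into a unique $(g+1)$-cycle because girth prevents the two length-$h$ halves from sharing any non-midpoint vertex. Writing $m(x):=|\{u\in N(v):x\in A_u\}|$, I would thus obtain
\[
\#\{(g+1)\text{-cycles through }v\}=\sum_{\{u,w\}}|A_u\cap A_w|=\sum_x\binom{m(x)}{2},\qquad \sum_x m(x)=k(k-1)^h.
\]

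Next, every $x\in\bigcup_u A_u$ has $d(v,x)\in\{h,h+1\}$, and I would call $x$ \emph{close} in the first case and \emph{far} in the second. For close $x$ there is a unique neighbour $u'\in N(v)$ with $d(u',x)=h-1$, and since $x\notin A_{u'}$ this gives $m(x)\le k-1$; for far $x$ only the trivial bound $m(x)\le k$ is available. The crucial step will be to identify close vertices of $A_u$ with $g$-cycles through the edge $uv$: the cycle $v,u,v_2,\ldots,v_{2h},v$ produces the close midpoint $v_{h+1}\in A_u$ with parent $v_{2h}$, and conversely a close $x\in A_u$ with parent $u'$ reassembles uniquely into such a $g$-cycle by concatenating the length-$h$ path $u\to x$, the length-$(h-1)$ path $x\to u'$, and the edges $u'v,vu$. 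Hence each $A_u$ contains exactly $\lambda$ close vertices, giving $S_c:=\sum_u|\{x\in A_u:x\text{ close}\}|=k\lambda$ and $S_f:=k(k-1)^h-k\lambda$.

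Combining these ingredients, $\sum_x m(x)^2\le(k-1)S_c+kS_f=k^2(k-1)^h-k\lambda$, and therefore
\[
\sum_x\binom{m(x)}{2}=\frac{1}{2}\Bigl(\sum_x m(x)^2-\sum_x m(x)\Bigr)\le\frac{1}{2}\bigl(k(k-1)^{h+1}-k\lambda\bigr)=\binom{k}{2}\Bigl((k-1)^h-\frac{\lambda}{k-1}\Bigr),
\]
which is the claimed bound. The main obstacle will be making the bijection in the middle step rigorous: one has to verify that the reassembled $g$-cycle is truly simple, which requires showing (via girth) that the length-$h$ and length-$(h-1)$ halves share no vertex other than $x$; any extra overlap, together with the edges $uv$ and $vu'$, would produce a closed trail of length strictly less than $g$, hence a cycle shorter than the girth --- a contradiction.
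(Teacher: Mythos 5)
Your proof is correct and follows essentially the same route as the paper's: both classify the $(g+1)$-cycles through $v$ by whether the vertex opposite $v$ lies at distance $h$ or $h+1$ from $v$, both use edge-girth-regularity to pin down the relevant incidence counts (your $S_c=k\lambda$ and $S_f=k(k-1)^h-k\lambda$ are exactly the paper's counts of edge-endpoints inside $D_h(v)$ and of edges between $D_h(v)$ and $D_{h+1}(v)$), and both finish with the same capped-convexity step, bounding $\sum\binom{m}{2}$ via $m\le k-1$ in the close case and $m\le k$ in the far case. Your bookkeeping through the neighbour-indexed sets $A_u$ and the multiplicity $m(x)$ is an equivalent repackaging of the paper's direct count in the distance layers, so no further comparison is needed.
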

\begin{proof}
Choose an arbitrary vertex $v$ and define the sets $D_i(v)$ of vertices as follows: $u\in D_i(v)$ if the length of the shortest $uv-$path is $i$.\\
First, count the number of edges in $D_h(v)$. Suppose that there is an edge $ab\in E$ in $D_h(v)$. Then $ab$, and the unique $h-$paths $av$ and $bv$ form a $g-$cycle. Since $G$ has girth $g$, this is a bijection between the edges in $D_h(v)$ and the $g-$cycles through $v$. So it is enough to count the number of $g-$cycles $v$. The graph is $k-$regular, hence $v$ lies on exactly $k$ edges and every edge is contained in exactly $\lambda$ distinct $g-$cycles, but in this way, every $g-$cycle through $v$ is counted twice. Therefore the number of $g-$cycles through $v$ is $\frac{k\lambda}{2}$, hence the number of edges in $D_h(v)$ is $\frac{k\lambda}{2}.$\\
Now count the number of edges between $D_h(v)$ and $D_{h+1}(v)$. There are $k(k-1)^{h-1}$ vertices in $D_h(v)$. These vertices have $k^2(k-1)^{h-1}$ edges but every edge in $D_h(v)$ is counted twice. The number of edges between $D_{h-1}(v)$ and $D_h(v)$ is $k(k-1)^{h-1}$, the number of edges in $D_h(v)$ is $\frac{k\lambda}{2}$, hence the number of edges between $D_h(v)$ and $D_{h+1}(v)$ is
$$k^2(k-1)^{h-1}-k(k-1)^{h-1}-2\frac{k\lambda}{2}=k\left((k-1)^h-\lambda\right).$$
\indent There are two types of $(g+1)-$cycles through $v$: the ones that reach $D_{h+1}(v)$ with two $(h+1)-$paths at the same vertex, and the ones that join two $h-$paths in $D_h(v)$ with a $2-$path. We give upper bounds for the number of $(g+1)-$cycles through $v.$\\
\indent If a vertex $w\in D_{h+1}(v)$ has $m$ neighbours in $D_h(v)$, then $v$ and $w$ can be with two distinct $(h+1)-$paths $\binom{m}{2}$ different ways, therefore the number of $(g+1)-$cycles through $v$ and $w$ is $\binom {m}{2}$. Thus there is a quadratic increase in the number of $(g+1)-$cycles through $v$ but $m\le k$ because the graph is $k-$regular. If we would like to maximize the number of $g+1-$cycles through $v$ that reaches $D_{h+1}(v)$, we have to choose some vertices in $D_{h+1}(v)$ that cover all the edges between $D_h(v)$ and $D_{h+1}(v)$ and have no other edges. Since the number of edges between $D_h(v)$ and $D_{h+1}(v)$ is $k\left((k-1)^h-\lambda\right)$, the number of $(g+1)-$cycles through $v$ that reach $D_{h+1}(v)$ is at most
$$\frac{k\left((k-1)^h-\lambda\right)}{k}\binom{k}{2}=\binom{k}{2}\left((k-1)^h-\lambda\right).$$
Similarly, if a vertex $w'\in D_h(v)$ has $m$ neighbours in $D_h(v)$, then $v$ and $w'$ can be connected with two $(h+1)-$paths in $\binom{m}{2}$ different ways. Therefore the number of $(g+1)-$cycles through $v$ and $w'$ is $\binom{m}{2}.$ Take the graph induced by the vertices of $D_h(v).$ Every degree is at most $k-1$ and the number of edges is $\frac{k\lambda}{2}$. Denote the set of degrees with $\{d_1,d_2,\ldots,d_{k(k-1)^h}\}.$ Now we give an upper bound for 
$$\sum_{i=1}^{k(k-1)^h}\binom{d_i}{2}=\sum_{i=1}^{k(k-1)^h}\frac{d_i^2-d_i}{2}=\frac12\sum_{i=1}^{k(k-1)^h}d_i^2-\frac{\lambda}{2}.$$
To do that we need to maximize the sum of squares such that the sum of degrees is constant, and $0\le d_i\le k-1$ for any $1\le i\le k(k-1)^h.$ Since $x^2+y^2\le (x+1)^2+(y-1)^2$ for any $x\le y$ real numbers, we obtain an upper bound if we set as many $d_i$ to $k-1$ as possible while the rest is $0$. Hence
$$\sum_{i=1}^{k(k-1)^h}d_i^2\le \frac{k\lambda}{k-1}(k-1)^2=\lambda k(k-1).$$
Therefore
$$\sum_{i=1}^{k(k-1)^h}\binom{d_i}{2}\le\frac{\lambda k(k-1)}{2}-\frac{\lambda k}{2}=\binom{k}{2}\lambda \left(1-\frac{1}{k-1}\right).$$
We gave upper bounds for each type of $(g+1)-$cycles through a certain vertex and now we add them to prove the lemma:
$$\binom{k}{2}\left((k-1)^h-\lambda\right)+\binom{k}{2}\lambda\left(1-\frac{1}{k-1}\right)=\binom{k}{2}\left((k-1)^h-\frac{\lambda}{k-1}\right).$$
\end{proof}
\begin{remark}
The upper bound of the previous lemma is sharp for the Petersen and Hoffmann-Singleton graphs.
\end{remark}
And now we conclude our paper with an application of Lemma \ref{cycles}.
\begin{proposition}{\label{IP_odd}}
Let $G$ be an $egr(n,k,g,\lambda)$ graph of odd girth. Then
$$n(k,g,\lambda)\ge\frac{k^{g+1}c(g-1,k)+k^{g-1}\left(c(g+1,k)+k(k-1)^{\frac{g+1}{2}}-\lambda k\right)-2k^{g+1}\lambda}{c(g-1,k)\left(c(g+1,k)+k(k-1)^{\frac{g+1}{2}}-\lambda k\right)-k^2\lambda^2}.$$
\end{proposition}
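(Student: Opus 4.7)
The plan is to adapt the moment method from the proof of Theorem~\ref{egr4}, but to apply the Cauchy--Schwarz inequality to three consecutive power sums of the non-principal spectrum — namely $\sum_{i\ge 2}\lambda_i^{g-1}$, $\sum_{i\ge 2}\lambda_i^{g}$, $\sum_{i\ge 2}\lambda_i^{g+1}$ — using Lemma~\ref{cycles} to convert cycle-counts into an upper bound on the highest moment. Write $g=2h+1$; by Theorem~\ref{regular} the principal eigenvalue is $\lambda_1=k$.

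I compute the three power sums in sequence. Since $g-1<g$, every closed walk of length $g-1$ is a tree walk, so $\sum_{i=1}^{n}\lambda_i^{g-1}=n\,c(g-1,k)$. Because $g$ is odd, a closed walk of length $g$ cannot be obtained from a shorter closed reduced walk by bouncing (bouncings change the length by an even amount, and no closed reduced walk of shorter odd length exists in a graph of girth $g$); hence every closed walk of length $g$ is a directed traversal of a $g$-cycle, and the argument in the proof of Lemma~\ref{cycles} — each vertex lies on $k\lambda/2$ girth cycles — gives $\sum_{i=1}^{n}\lambda_i^{g}=nk\lambda$. For length $g+1$ the closed walks split into tree walks and directed traversals of $(g+1)$-cycles, any other configuration being ruled out by a parity/edge-count argument: a reduced closed walk of length $g+1$ supported on cycles of length $\ge g$ leaves no room for attached paths, so the support must be a single $(g+1)$-cycle. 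Bounding the number of $(g+1)$-cycles through any vertex by Lemma~\ref{cycles} and simplifying $2\binom{k}{2}\bigl((k-1)^{h}-\lambda/(k-1)\bigr)=k(k-1)^{(g+1)/2}-k\lambda$ yields
\[
\sum_{i=1}^{n}\lambda_i^{g+1}\le n\bigl[c(g+1,k)+k(k-1)^{(g+1)/2}-k\lambda\bigr].
\]

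With the three moments in hand I apply Cauchy--Schwarz to the real vectors $(\lambda_2^{h},\ldots,\lambda_n^{h})$ and $(\lambda_2^{h+1},\ldots,\lambda_n^{h+1})$ to obtain
\[
\Bigl(\sum_{i=2}^{n}\lambda_i^{g}\Bigr)^{2}\le\Bigl(\sum_{i=2}^{n}\lambda_i^{g-1}\Bigr)\Bigl(\sum_{i=2}^{n}\lambda_i^{g+1}\Bigr).
\]
Subtracting the $\lambda_1=k$ contributions from each sum, substituting the two identities and the upper bound above, expanding both sides, and isolating $n$ leads directly to the claimed inequality.

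The main obstacle is the walk-enumeration at length $g+1$: I must verify that no closed walk of length $g+1$ comes from subgraphs involving a $g$-cycle with an extra bouncing edge, a theta-graph, a tadpole, or a union of two cycles. The decisive ingredient is the parity/length bookkeeping sketched above — bouncings change the length by even amounts, so only a closed reduced walk of length exactly $g+1$ can contribute, and any support built from cycles of length $\ge g$ with attached paths has total traversal length at least $2g>g+1$ unless it is a single $(g+1)$-cycle. Once this enumeration is justified, the remaining algebra is routine; for the resulting inequality to be informative the denominator $c(g-1,k)\bigl(c(g+1,k)+k(k-1)^{(g+1)/2}-k\lambda\bigr)-k^{2}\lambda^{2}$ must be positive, an implicit condition analogous to the upper bound on $\lambda$ required in Theorem~\ref{lowerbound}.
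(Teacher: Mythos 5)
Your proposal follows essentially the same route as the paper: you compute the $(g-1)$-th, $g$-th and $(g+1)$-th spectral moments, identify them with tree walks, directed girth-cycle traversals, and tree walks plus directed $(g+1)$-cycle traversals respectively, bound the last via Lemma~\ref{cycles} (noting $2\binom{k}{2}\bigl((k-1)^h-\lambda/(k-1)\bigr)=k(k-1)^{(g+1)/2}-k\lambda$), and apply Cauchy--Schwarz to the non-principal eigenvalues with the split $\lambda_i^{g}=\lambda_i^{h}\lambda_i^{h+1}$. The only differences are that you spell out the walk-enumeration and the implicit positivity of the denominator, both of which the paper leaves tacit.
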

\begin{proof}
Let $k=\lambda_1\ge\ldots\ge\lambda_n$ be the eigenvalues of the adjacency matrix of $G$. Then
$$\sum_{i=1}^n\lambda_i^{g-1}=nc(g-1,k),$$
$$\sum_{i=1}^n\lambda_i^{g}=nk\lambda,$$
$$\sum_{i=1}^n\lambda_i^{g+1}=nc(g+1,k)+\sum_{v\in G}\textrm{(the number of directed (g+1)-cycles containing v)}.$$
Using the fact the the largest eigenvalue is $k$ and the upper bound for the number of $(g+1)-$cycles, we get
$$\sum_{i=2}^n\lambda_i^{g-1}=nc(g-1,k)-k^{g-1},$$
$$\sum_{i=2}^n\lambda_i^{g}=nk\lambda-k^g,$$
$$\sum_{i=2}^n\lambda_i^{g+1}=nc(g+1,k)+2n\binom{k}{2}\left((k-1)^h-\frac{\lambda}{k-1}\right)-k^{g+1}.$$
Let $g=2h+1.$ Then the Cauchy-Schwarz inequality gives
$$\left(\sum_{i=2}^n\lambda_i^g\right)^2=\left(\sum_{i=2}^n\lambda_i^{h}\lambda_i^{h+1}\right)^2\le \left(\sum_{i=2}^n\lambda_i^{2h}\right)\left(\sum_{i=2}^n\lambda_i^{2h+2}\right)=\left(\sum_{i=2}^n\lambda_i^{g-1}\right)\left(\sum_{i=2}^n\lambda_i^{g+1}\right).$$
Now we can give an upper bound for the right hand side and we get the following inequality:
$$\left(nk\lambda-k^g\right)^2\le \left(nc(g-1,k)-k^{g-1}\right)\left(nc(g+1,k)+2n\binom{k}{2}\left((k-1)^h-\frac{\lambda}{k-1}\right)-k^{g+1}\right).$$
We have a quadratic inequality in $n$. After simplifying the inequality we get the lower bound for the order of $G$:
$$n(k,g,\lambda)\ge\frac{k^{g+1}c(g-1,k)+k^{g-1}\left(c(g+1,k)+k(k-1)^{\frac{g+1}{2}}-\lambda k\right)-2k^{g+1}\lambda}{c(g-1,k)\left(c(g+1,k)+k(k-1)^{\frac{g+1}{2}}-\lambda k\right)-k^2\lambda^2}.$$
\end{proof}
\begin{corollary}
The Petersen graph is an extremal $egr(10,3,5,4)$ graph.
\end{corollary}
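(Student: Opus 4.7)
The plan is two-fold: first verify that the Petersen graph realizes the parameters $(n,k,g,\lambda)=(10,3,5,4)$, and then specialize Proposition~\ref{IP_odd} at $k=3$, $g=5$, $\lambda=4$ to obtain the matching lower bound $n(3,5,4)\geq 10$.

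For the realization step, the Petersen graph is well known to have $10$ vertices, be $3$-regular, and have girth $5$. By its edge-transitivity every edge is contained in the same number $\lambda$ of pentagons, and a double-count using the fact that the graph contains exactly $12$ pentagons while having $15$ edges gives $\lambda = 12\cdot 5/15 = 4$. Hence the Petersen graph is indeed an $egr(10,3,5,4)$ graph, and it remains only to exclude the existence of a smaller one.

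For the lower bound, I would simply substitute into the formula of Proposition~\ref{IP_odd}. With $h=2$ the relevant quantities are $c(4,3)=2\cdot 9-3=15$, $c(6,3)=5\cdot 27-6\cdot 9+2\cdot 3=87$, $k(k-1)^{(g+1)/2}=3\cdot 2^{3}=24$, $k^{g-1}=81$, $k^{g+1}=729$, $k\lambda=12$, and $k^{2}\lambda^{2}=144$. A direct computation then produces numerator $729\cdot 15+81\cdot(87+24-12)-2\cdot 729\cdot 4 = 10935+8019-5832 = 13122$ and denominator $15\cdot 99-144 = 1341$, so
\[
n(3,5,4) \;\geq\; \frac{13122}{1341} \;>\; 9.78,
\]
and integrality of $n$ forces $n(3,5,4)\geq 10$. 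Combined with the preceding paragraph, extremality of the Petersen graph follows.

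The only non-routine step is counting pentagons through each edge of the Petersen graph; the remainder is mechanical arithmetic substitution into an already-proven proposition. I foresee no genuine obstacle, though one should be careful to verify that the resulting ratio is \emph{strictly} greater than $9$ so that rounding up to the next integer really does deliver $10$. (As a sanity check, since $\lambda=(k-1)^{(g-1)/2}$ here, even the older Theorem~\ref{lowerbound} yields $n\geq n_0(3,5)=10$, independently confirming the conclusion.)
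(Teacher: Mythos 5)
Your proposal is correct and follows essentially the same route as the paper: both specialize Proposition \ref{IP_odd} at $(k,g,\lambda)=(3,5,4)$ to obtain $n(3,5,4)\ge 13122/1341 \approx 9.79$ and conclude by integrality, and your arithmetic checks out against the paper's specialized formula. The only difference is that you also verify explicitly that the Petersen graph realizes the parameters (via its $12$ pentagons and edge-transitivity), a step the paper takes for granted.
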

\begin{proof}
The theorem states that
$$n(k,5,\lambda)\ge \frac{3k^6+k^5-3k^4+k^3-2k^4\lambda-k^3\lambda}{2 k^4 + 3 k^3 - 8 k^2 + 5 k - 1-\lambda^2-2k\lambda+\lambda}.$$
Hence $n(3,5,4)\ge9.78$ so the order of the Petersen graph is extremal.
\end{proof}
\begin{remark}

Unfortunately, the lower bound presented in Proposition \ref{IP_odd} is weaker than the lower bound in Theorem \ref{lowerbound}.
However, the inequality in Lemma \ref{cycles} may be useful for further improvements for the lower bound of the order of edge-girth-regular graphs of odd girth.
\end{remark}
\newpage

\end{document}